\documentclass[11pt]{amsart}
\usepackage{amssymb}
\usepackage{amsmath}
\usepackage{amsthm}
\usepackage[textwidth=16cm, textheight=24cm, marginratio=1:1]{geometry}

\usepackage[english]{babel}
\usepackage[utf8]{inputenc}
\usepackage[T1]{fontenc}
\usepackage{todonotes}
\usepackage{xcolor}
\usepackage{booktabs}

\usepackage[backref=page]{hyperref}
\hypersetup{
    colorlinks,
    linkcolor={red!80!black},
    citecolor={blue!80!black},
    urlcolor={blue!80!black}
}
\usepackage{setspace}
\usetikzlibrary{matrix,arrows, cd, calc}  
\numberwithin{equation}{section}
\newtheorem{theorem}{Theorem}[section]
\newtheorem{corollary}[theorem]{Corollary}
\newtheorem{lemma}[theorem]{Lemma}

\newtheorem{proposition}[theorem]{Proposition}
\theoremstyle{definition}
\newtheorem{definition}[theorem]{Definition}
\newtheorem{remark}[theorem]{Remark}
\newtheorem{example}[theorem]{Example}
\newtheorem{conjecture}[theorem]{Conjecture}
\newtheorem{question}[theorem]{Question}
\newtheorem{notation}[theorem]{Notation}

\DeclareMathOperator{\Spec}{Spec}

\DeclareMathOperator{\Frac}{Frac}

\newcommand{\into}{\hookrightarrow}

\newcommand{\kk}{\Bbbk}%
\DeclareMathOperator{\Speck}{Spec_{\kk}}
\newcommand{\spann}[1]{\left\langle #1 \right\rangle}

\newcommand{\kkt}{\kk[\![t]\!]}
\newcommand{\ttfrac}[1]{{#1}(\!(t)\!)}
\newcommand{\ttpuis}[1]{{#1}\{\!\{t\}\!\}}
\newcommand{\kktfrac}{\kk(\!(t)\!)}
\newcommand{\kktpuis}{\kk\{\!\{t\}\!\}}
\newcommand{\Ct}[1]{\mathbb{C}_{#1}\{t\}}
\newcommand{\mm}{\mathfrak{m}}%
\newcommand{\pp}{\mathfrak{p}}%
\newcommand{\qq}{\mathfrak{q}}%
\newcommand{\kkxy}{\kk\langle x, y \rangle}%
\newcommand{\kkxyhat}{\kk\langle\!\langle x, y \rangle\!\rangle}%
\newcommand{\cM}{\mathcal{M}}%

\DeclareMathOperator{\HH}{HH}

\begin{document}
\def\F{{\mathbb F}}
\title{On a conjecture by Michael Wemyss regarding the calculation of GV invariants}
\author{Joachim Jelisiejew}
\address[Joachim Jelisiejew]{University of Warsaw, Faculty of Mathematics, Informatics and Mechanics,
 Stefana Banacha~2, 02-097 Warsaw, Poland.}
\email{j.jelisiejew@uw.edu.pl}
\author{Agata Smoktunowicz}
\address[Agata Smoktunowicz]{University of Edinburgh, Peter Guthrie Tait Road, King's Buildings, Edinburgh, United Kingdom.}
\email{A.Smoktunowicz@ed.ac.uk}
\date{\today{}}
\begin{abstract}
Contraction algebras are noncommutative algebras introduced by Donovan and Wemyss to classify
of $3$-dimensional flops. Wemyss conjectures that contraction algebras can be deformed to a single semisimple algebra.
This gives an intrinsic method of calculating Gopakumar-Vafa invariants of the flop.

The main result is a proof of Wemyss' conjecture for types A and D.
In the course of the proof, we recall and introduce new techniques for constructing flat deformations of
associative algebras and compare various notions of deformations. We also put forward
two conjectures which hint towards a deeper theory.
\end{abstract}
\maketitle

\section{Introduction}

    In this article, we investigate an interesting class of noncommutative
    algebras, so-called contraction algebras of Donovan-Wemyss. The results can be viewed from
    two completely different angles, which can be very crudely summarised as follows
    \begin{enumerate}
        \item From the point of view of algebraic geometry, contraction algebras $A$ appear
            as a classifying object of flops in dimension three (see~\S\ref{ssec:geometry} below). The curve which is flopped yields
            Gopakumar-Vafa invariants. The overarching question is
            \begin{question}\label{ref:GVabstract}
                Can the GV invariants be recovered solely from the abstract algebra $A$?
            \end{question}
            The question can be restated as
            \begin{question}\label{ref:GVconcrete}
                Does $A$ deform to a unique semisimple algebra
                \[\kk^{\times n_1}\times \mathbb{M}_2(\kk)^{\times n_2}\times
                    \mathbb{M}_3(\kk)^{\times n_3}\times \ldots
                \] where $n_1, n_2, n_3$
                are the GV invariants given by geometry?
            \end{question}
        \item From the point of view of noncommutative algebra, contraction algebras $A$ can be viewed as
            ``given from above'': they are finite-dimensional algebras with interesting deformations.
            Hence, they fit into the long line of research into deformations of such algebras (discussed in~\S\ref{ssec:algebra} below)
            and, much more importantly, suggest new lines of research in this field, see Conjecture~\ref{conj:unobstructedness} below.
    \end{enumerate}
    We want to keep the article accessible to both communities and so we
    restrict to elementary methods whenever possible.

    Contraction algebras are defined formally in Definition~\ref{ref:contractionTypes:def} below.
    A representative example is the algebra $D_{n,m}$ presented as
    \begin{equation}\label{eq:Dnm}
        \frac{\kkxy}{\left( xy+yx,\ x^{2n-1} + x^{2m-2} + y^2,\ x^{2n+2m-3} \right)}
    \end{equation}
    where $n\geq m\geq 2$.

    Our main result answers Question~\ref{ref:GVconcrete} affirmatively, for types A and D.
    \begin{theorem}[GV invariants are determined by contraction algebras,
        Theorem~\ref{ref:obstructionsMain:thm},
        Theorem~\ref{ref:existence:thm}]\label{ref:GVinvariants:thm}
        Contraction algebras of type $A$ and $D$ deform to a single semisimple algebra, as listed in Theorem~\ref{ref:obstructionsMain:thm}.
    \end{theorem}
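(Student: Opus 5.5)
The theorem has two halves: an \emph{existence} statement, that each contraction algebra $A$ of type $A$ or $D$ admits a flat deformation to the expected semisimple algebra $S=\prod_k \mathbb{M}_k(\kk)^{\times n_k}$ with the GV invariants as multiplicities, and a \emph{uniqueness} statement, that no other semisimple algebra arises as a deformation of $A$. I will treat these separately, as the cross-references to Theorem~\ref{ref:obstructionsMain:thm} and Theorem~\ref{ref:existence:thm} suggest.

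For existence, I will write down explicit one-parameter families. For the type~A algebras (quotients of $\kk[x]$ or $\kk\langle x,y\rangle$ by relations deforming $x^n$), the family is obtained by perturbing the defining relation to a product of distinct linear factors, which yields $\kk^{\times n}$. For $D_{n,m}$ as in \eqref{eq:Dnm}, I will deform the three relations, for instance $xy+yx - t(\ldots)$, $x^{2n-1}+x^{2m-2}+y^2 - t(\ldots)$ and a deformed $x^{2n+2m-3}$, so that the generic fibre is spanned by explicit matrices. For example, $x$ is taken diagonal and $y$ anticommuting off-diagonal in $2\times 2$ blocks, which produces copies of $\mathbb{M}_2(\kk)$, while extra eigenvalues of $x$ with $y=0$ produce copies of $\kk$. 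The key technical point is flatness, meaning the dimension of the fibre stays constant. I plan to check this by exhibiting a $\kk[t]$-spanning set of the deformed algebra of cardinality $\dim A$, for instance the monomials $x^i, x^iy$, obtained via a Gröbner or diamond-lemma reduction. Combined with a lower bound from the explicit representation into $S\otimes\kk(t)$, this forces freeness. Counting dimensions, $\dim A=\sum n_k k^2$, then identifies the generic fibre as $S$.

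For uniqueness, I will use deformation-invariant constraints. First, $\dim$ is preserved. Second, the abelianization, or more generally the dimension of the space of $k$-dimensional representations, is upper semicontinuous; equivalently, there are bounds on $\dim A/[A,A]$ and on $\dim A/(\text{commutator ideal})$. In the direction relevant to $S$, the number of simple factors is bounded by $\dim A/[A,A]$ of the special fibre, by semicontinuity of the trace space. The number of one-dimensional representations, $n_1=\dim S^{ab}$, is bounded by $\dim A^{ab}$, which is computable: for $D_{n,m}$ the abelianization is $\kk[x,y]/(xy, x^{2n-1}+x^{2m-2}+y^2,\ldots)$. Moreover, $A$ is generated by two elements with $y$ satisfying relations forcing simple modules to have dimension at most~$2$ in type~D, since $y^2$ is central-ish and $xy=-yx$. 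This constraint will be made to pass to the generic fibre, for instance by showing that a polynomial identity such as the standard identity $s_4$, or the identity implied by $y^2$ and $x^2$ being central, holds and deforms. Together these equations pin down $(n_1,n_2)$: an upper bound on $n_1$ from $A^{ab}$, equality $n_1+4n_2=\dim A$, and a lower bound on $n_1+n_2$ from the trace space.

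The main obstacle is the uniqueness half. Semicontinuity arguments give inequalities pointing in the right direction only for some invariants, and proving that a polynomial identity of $A$ survives deformation is not automatic, since identities are closed conditions on the special fibre, not the generic one. I would first try to show that the centre of the deformed algebra contains lifts of $x^2,y^2$ (using $\HH^0$ and an obstruction computation), and then get $\mathbb{M}_{\le 2}$ from the module being generated over that centre by $1,x,y,xy$.
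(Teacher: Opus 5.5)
\textbf{The uniqueness half does not close: your inequalities all point the same way.}

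Semicontinuity of the commutator span gives $\dim S/[S,S]\le\dim A/[A,A]$, that is, $n_1+n_2\le\dim A/[A,A]$. This is an \emph{upper} bound, as you state at first, not the lower bound you use in your final count. Likewise $n_1\le\dim A^{\mathrm{ab}}$. Combined with $n_1+4n_2=\dim A$, each of these only gives $n_2\ge n-1$.

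Concretely, for $D_{n,m}$ with $m\le n$:
\begin{itemize}
\item $\dim A=4n+2m-4$;
\item $A^{\mathrm{ab}}\cong\kk[x,y]/(xy,\,x^{2m-1},\,y^2+x^{2m-2})$ has dimension $2m$;
\item $\dim A/[A,A]=n+2m-1=\dim Z(A)$, so your trace bound is just the paper's Fact~1.
\end{itemize}
The algebra $\mathbb{M}_2(\kk)^{\times n}\times\kk^{2m-4}$ passes all three tests. Similarly, $\mathbb{M}_2(\kk)^{\times n}\times\kk^{2n-3}$ passes them for $m>n$.

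The missing ingredient is an invariant that bounds $n_2$ from \emph{above}. This is Fact~2 in the proof of Theorem~\ref{ref:obstructionsMain:thm}. The element $x$ is nilpotent with $x^{d-1}\neq 0$, where $d=2n+2m-3$ (resp.\ $d=4n-2$). Linear independence of $1,x,\dots,x^{d-1}$ is an open condition (Proposition~\ref{ref:Agata:prop}), so the generic fibre contains an element with $d$ independent powers. By Cayley--Hamilton, every element of $\mathbb{M}_2(\kk)^{\times n_2}\times\kk^{n_1}$ has at most $2n_2+n_1$ independent powers. Together with $n_1+4n_2=\dim A$, this forces $n_2\le n-1$.

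\textbf{Two further steps are only sketched.}

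First, to exclude blocks $\mathbb{M}_k(\kk)$ with $k\ge 3$, you propose deforming a polynomial identity or lifting $x^2,y^2$ to central elements of the family. Neither is available:
\begin{itemize}
\item identities pass from the generic fibre to the special one, not back;
\item the centre can jump in families, and its good behaviour near contraction algebras is precisely the paper's open Conjecture~\ref{conj:unobstructedness};
\item the simple modules of $A$ itself carry no information, since $A$ is local.
\end{itemize}
The paper instead uses openness of spanning (Corollary~\ref{ref:ObstructionSmallMatrices}). Since $A$ is spanned by the elements $x^i, x^iy$, the generic fibre is spanned by $(x')^i,(x')^iy'$ for some $x',y'$. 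In a factor $\mathbb{M}_k(\kk)$, Cayley--Hamilton then gives $k^2\le 2k$.

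Second, for existence, your freeness criterion is sound: a $\kk[t]$-spanning set of size $\dim A$ together with a generic representation onto $S\otimes\kk(t)$ does force freeness. But you never produce the family, and that is the whole content. The paper does not write a single family either. Instead it:
\begin{itemize}
\item degenerates $D_{n,m}\rightsquigarrow D_{n-1,m-1}\times A_2$ (Proposition~\ref{trzy}), by replacing $x^{2n+2m-3}$ with $x^{2n+2m-7}(x^2-t^2)^2$ and substituting $y=y'(x^2-t^2)$ on the factor where $x$ is nilpotent;
\item deforms $A_2$ to $\mathbb{M}_2(\kk)\times\kk\times\kk$ (Lemma~\ref{jeden});
\item treats the base cases $D_{n,1}$ and $D_{1,\infty}$ directly;
\item chains these steps by transitivity of deformations (Theorem~\ref{J1}), which rests on the orbit-closure and descent arguments of Section~\ref{sec:deformations}.
\end{itemize}
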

    This is quite unexpected algebraically, since ``most'' commutative algebras do not deform to a single semisimple one, and quite some do not
    deform to any such, see the sources in~\S\ref{ssec:algebra}. Thus, one can hope that contraction algebras form a new, interesting
    and well-behaved class of finite-dimensional noncommutative algebras.

    Pushing the idea of Theorem~\ref{ref:GVinvariants:thm} further, we observe that contraction algebras
    behave much more regularly than expected even by Theorem~\ref{ref:GVinvariants:thm}, which hints at a deeper theory.
    We formulate this precisely in the following conjecture, which is based also on discussions with Michael Wemyss.
    \begin{conjecture}\label{conj:unobstructedness}
        Let $A$ be a contraction algebra and $d = \dim_{\kk} A$. Then
        \begin{enumerate}
            \item\label{it:unobstructedness} a point $[A]$ corresponding to $A$
                in the moduli space $\cM_d$ of associative unital algebras
                (see~\S\ref{sec:deformations}) is smooth,
                in other words, $A$ is unobstructed,
            \item the dimension of the centre is constant in an open neighbourhood of $[A]$ and the
                associated map $A\mapsto Z(A)$ is smooth.
        \end{enumerate}
    \end{conjecture}

    Both parts of the conjecture are rather surprising.
    First, the centre of an algebra usually behaves poorly in families, even in best behaved cases:
    for the quantisation deformation
    \[
        \frac{\mathbb{C}\langle x, y \rangle[t]}{(xy - yx - t)}
    \]
    the centre for $t = 0$ is
    much larger than for $t = \lambda\in \mathbb{C}\setminus \{0\}$. A more trivial example is that every (unital associative) rank $d$ algebra $A$
    is a deformation of a trivial algebra $A_0 = \kk[x_1, \ldots ,x_{d-1}]/(x_1, \ldots ,x_{d-1})^2$, which is commutative.

    Second, the deformations of $Z(A)$ are much more understood, since $Z(A)$ is commutative. An unobstructedness result would mean that
    to understand deformations of $A$, we can reduce to the commutative case, where much more tools are available~\S\ref{ssec:algebra}.
    
    We verified Conjecture~\ref{conj:unobstructedness}\eqref{it:unobstructedness} for small
    values of $d$ using GAP's~\cite{GAP4} package QPA~\cite{QPA}. Importantly, the conjecture
    does \emph{not} follow from vanishing of appropriate Hochschild cohomology group $\HH^3(A, A)$.
    This group does not vanish already on our small examples. Instead, we use Theorem~\ref{ref:GVinvariants:thm}
    to provide a lower bound for the dimension of $\cM_d$ near $[A]$ and show that the smoothness
    of $[A]$ is equivalent to $\dim_{\kk} \HH^1(A, A) = \dim_{\kk} \HH^2(A, A)$, which we verified directly.

    A possible approach to Conjecture~\ref{conj:unobstructedness} is given by the theory of noncommutative
    regular normalising sequences also known as noncommutative complete intersections~\cite{Cassidy_Vancliff, Levasseur, Shelton_Tingey}.
    For example, the contraction algebra $D_{n,m}$ from~\eqref{eq:Dnm} can be viewed as a quotient of
    \[
        S = \frac{\kkxy}{(xy+yx)}
    \]
    by the relations $x^{2n-1}y$, $x^{2n-1} + x^{2m-2} + y^2$ and then completed by dividing by $x^{2n+2m-3}$.
    The algebra $S$ is AS-regular, the sequence $x^{2n-1}y$, $x^{2n-1} + x^{2m-2} + y^2$ is normalising
    and so the quotient resembles a complete intersection. There are some technical issues to resolve using this approach.
    First, completion by passing to $\kkxyhat$ is necessary, second, the theory of noncommutative complete intersections 
    seems to exist mostly in the graded setting, third, a result stating that such complete intersections are unobstructed
    is currently missing, as least according to our knowledge. Viewed from commutative algebra perspective,
    these obstacles should be surmountable, as the commutative theory followed a similar arc and the final results are
    obtained, see for example~\cite[\S2.9]{HarDeform}.

\subsection{Deformations of finite-dimensional algebras}\label{ssec:algebra}

The study of deformations of associative finite-dimensional is a classical and difficult subject.
Some surveys are~\cite{Gerstenhaber_Schack__survey_general, Schedler, Fox}.
Its cohomological foundations were build
by Gerstenhaber~\cite{G1, G2, G3, G4, S}, also later with Schack~\cite{Gerstenhaber_Schack__physical, Gerstenhaber_Schack__survey_general} and others~\cite{Gerstenhaber_Giaquinto__Compatible}. The moduli space of structure constants was used early on~\cite{Artin__Azumaya, Procesi, Voigt, Gabriel, Flanigan, Flanigan2} and is sometimes
crucial to use basic geometric structures, such as tangent spaces~\cite{LeBruyn_Reichstein} and orbit dimensions~\cite{GP}. It naturally appears in applications, such as to higher-rank vector bundles~\cite{LeBruyn_Reichstein} or to group algebras, related to Donald-Flanigan conjecture. The geometry of this space is known for small dimensions, up to eight, see~\cite{Happel, Mazzola, DanaPicard_Schaps_up_to_six, DanaPicard_Schaps_up_to_seven}, but unknown in general and quite pathological, with nonreduced components in positive characteristic~\cite{DanaPicard_Schaps_nonreduced, Gerstenhaber_Schack__relative}. It is natural and somewhat easier to linearise the problem and classify left modules~\cite{Modules_over_Weyl, Modules_over_free}. The theory is much related to tilting~\cite{Happel_HH, King_moduli}. The field remains active, see~\cite{Amsalem_Ginosar, D, FO, HMLR, MRRS, Shelter}. Much more is known if one assumes commutativity, as in this case the space is directly related to the Hilbert scheme of points~\cite{poonen_moduli_space, JJ, JJ3, JJ4, JJ5}. Deformations are also appearing in Kontsevich's deformation-quantisation, see for example~\cite{HV}, which we do not discuss further.

In the context of contraction algebras, strongly flat deformations are introduced in~\cite{Wemyss2} and small cases of Theorem~\ref{ref:GVinvariants:thm} are proven in~\cite{DDS}.

We recall much of the theory in Section~\ref{sec:deformations} in a way, hopefully, accessible to both target audiences. From the algebraic-geometry point
of view the theory is likely very standard, with the possible exception of
Beauville-Laszlo-type result (Theorem~\ref{ref:BeauvilleLaszlo:thm}) which
allows us to pass from deformations over $\kkt$ to the ones over $\kk[t]$.
As far as we know, this result does not follow formally from Beauville-Laszlo's
result (due to the technical fact that noncommutative algebras are not
equivalent to cyclic modules), yet it does follow from the proof.

\subsection{Algebraic geometry context}\label{ssec:geometry}

Contraction algebras are introduced by Donovan and Wemyss in the context of Wemyss' successful programme on classification of $3$-dimensional flops. In particular, with Iyama~\cite{IW} Wemyss introduced maximal modification algebras, and later, with Donovan~\cite{DW}, refined them to contraction algebras. Brown and Wemyss recently gave a purely algebraic description of these algebras in the local case, as completed noncommutative Jacobi algebras~\cite{GV}. The story is beautifully summarised in Wemyss' ICM 2026 talk materials. See also \cite{BD, HT, 10, K1, K2, Toda, Zhang} for deformation-theoretic part of this beautiful programme.

The curve which is flopped yields Gopakumar-Vafa (GV) invariants. They were related by Toda~\cite{Toda} to deformations of its thickenings.
Toda also observed that deforming the flop to simpler flops yields a family of contraction algebras, but it is not clear whether the family is flat.
Therefore, it is very interesting whether the GV invariants can be read off directly from the contraction algebra itself.

 \subsection{Outline of the paper}

 After the introduction, the paper is divided into two parts. In Section~\ref{sec:deformations} we recall the general theory
 of deformations and prove Beauville-Laszlo descent (Theorem~\ref{ref:BeauvilleLaszlo:thm}), culminating in Theorem~\ref{J1} which
 asserts that all introduced definitions of deformations agree and yields
 transitivity of deformations. In Section~\ref{sec:contraction} we prove
 Theorem~\ref{ref:GVinvariants:thm}, which follows from two results of very
 different flavour. Theorem~\ref{ref:obstructionsMain:thm} provides
 obstructions to deformations to semisimple algebras except for one type, which
 Theorem~\ref{ref:existence:thm} provides the desired deformation.

 \section{Deformations of finite-dimensional associative algebras}\label{sec:deformations}

 All of the algebras we consider are associative and have an identity element. 
In this section we review the various existing notions of deformations, as a reference especially
for the noncommutative algebraic audience. Most results will be familiar for algebraic geometers.
A notable exception is the Beauville-Laszlo descent for finite-dimensional algebras.
The main result is Theorem~\ref{J1}, which shows that many notions of deformations agree.

We work over an algebraically closed field $\kk$ of characteristic zero. By $\kkt$ we denote the formal power series and by $\kktfrac$ its field of fractions.
By $\kk[t^{\pm 1}]$ we denote the ring $\kk[t, t^{-1}]$.
By
\begin{equation}\label{eq:Puiseux}
 \kktpuis = \bigcup_{n\geq 0} \kk\left(\!\left(t^{1/n}\right)\!\right)
\end{equation}
we denote the field of Puiseux series. This field is the algebraic closure of
$\kktfrac$, see, for
example~\cite[Theorem~2.1.5]{Maclagan_Sturmfels__Tropical_Geometry}.
Sometimes, when considering convergent power series, we will reduce to $\kk = \mathbb{C}$. In this case, we let
$\Ct{r}\subseteq \mathbb{C}[\![t]\!]$ to be the subring of power series convergent in a fixed radius $r>0$ around zero.

Throughout the article, $A$, $B$, $C$ are associative, unital, possibly non-commutative, finite-dimensional $\kk$-algebras.
For such an algebra $A$, we denote by $A[t]\simeq A\otimes_{\kk} \kk[t]$ the
ring of polynomials with coefficients in $A$, and by $A[\![t]\!] = A\otimes_{\kk}
\kkt$ the ring of formal power series with coefficients in $A$. When $\kk = \mathbb{C}$, we
denote by $A\{t\} := A\otimes_{\mathbb{C}} \Ct{r}$ the ring of power series convergent in radius $r>0$ with coefficients in $A$.
We omit the $r$ from notation $A\{t\}$, as the precise value of $r$ will be of little importance for us.

 We recall the definition of the formal deformation of $A$ from \cite{S}.
 
 \begin{definition}\label{SWbook} A \emph{formal deformation} $(A_{t}, \circ, +)$ of $A$ is an associative $\kkt$-bilinear multiplication $\circ $ on the $\kkt$-module $A[\![t]\!]$. We assume that the image of the identity element in $A$ acts as the identity element in $A[\![t]\!]$.
     \begin{enumerate}
         \item Let $\kk = \mathbb{C}$. The formal deformation $(A_{t}, \circ, +)$ is {\em convergent (in radius $r$)}
             if for every $a,b\in A$, the element $a\circ b$ lies in $A\{t\}$.
         \item The deformation $(A_{t}, \circ, +)$ is of {\em polynomial type}
             if for every $a,b\in A$, the element $a\circ b$ lies in $A[t]$.  
     \end{enumerate}
\end{definition}

\begin{remark}\label{ref:fibre}
For $\kk = \mathbb{C}$, let $A_t$ be a convergent formal deformation of $A$ and let $a_{1}, \ldots , a_{d}\in A$ be a basis of $A$ as a linear vector space over field $\mathbb C$. Note that every element of $A\{t\}$ can be written in the form
\[\sum_{i=1}^{n}a_{i}f_{i}(t),\]
 for some $f_{i}\in \mathbb C\{t\}$. We then have 
\[a_{i}\circ a_{j}=\sum_{1\leq i,j,k\leq n}g_{i,j}^{k}(t)a_{k}.\] 
By assumption, there is a non-empty neighbourhood of $0\in \mathbb{C}$ on which all $g_{i,j}^k$ are convergent. For $s$ in this neighbourhood
we can substitute $t :=s$ and obtain a $\mathbb{C}$-algebra with multiplication
\[a_{i}\circ a_{j}=\sum_{1\leq i,j,k\leq n}g_{i,j}^{k}(s)a_{k}.\] 
We denote the resulting $\mathbb{C}$-algebra by $A_{t=s}$.
\end{remark}

\newcommand{\calA}{\mathcal{A}}%
\newcommand{\calAhat}{\widehat{\calA}}
\newcommand{\conv}{\operatorname{cnv}}%
\newcommand{\calAconv}{\calA^{\conv}}
\begin{remark}\label{ref:twoConventions:rem}
    Consider a formal deformation $(A_t, \circ{}, +)$. This data can be formulated equivalently by saying that we have
    a $\kkt$-algebra $\calAhat$, which is a free $\kkt$-module, together with an isomorphism $\calAhat/(t)\simeq A$.
    The passage from $(A_t, \circ{}, +)$ to $\calAhat$ amounts to taking $\calAhat := A[\![t]\!]$ with associated multiplication $\circ{}$.
    \begin{enumerate}
        \item If $(A_t, \circ{}, +)$ is of polynomial type, then we can even form a $\kk[t]$-algebra $\calA$, free as a $\kk[t]$-module,
            together with an isomorphism $\calA/(t)\simeq A$. Again, $\calA$ is equal to $A[t]$ with the associated multiplication $\circ{}$.
        \item If $\kk = \mathbb{C}$, we can form $\calAconv$ a
            $\Ct{r}$-algebra, free as a $\Ct{r}$-module, with multiplication
            induced by $\circ{}$. For a complex number $s$ with $|s| < r$, the
            quotient algebra $\calAconv/(t-s)$ is isomorphic to $A_{t=s}$
            defined in Remark~\ref{ref:fibre}.
    \end{enumerate}
    It may seem superfluous to introduce all these subrings of $A[\![t]\!]$, however they are necessary for the proof of
    Theorem~\ref{ref:BeauvilleLaszlo:thm} and, more generally, they are natural to consider when we want to form quotient rings such as $A_{t=s}$.
\end{remark}

For a fixed algebras $A$ and $B$ we would like to have a notion of a deformation of $A$ to $B$.
This is quite straightforward to define for convergent or polynomial type deformations, but it is
a little less intuitive in general, since we cannot evaluate $\kkt$ on $t = s\in \kk \setminus\left\{ 0\right\}$.

We recall two notions: strongly flat deformation (introduced in~\cite[p.1]{Wemyss2}) and various flavours of deformations, present classically throughout algebraic geometry (but not noncommutative algebra), see for example~\cite{G4}.
 \begin{definition}\label{flat}
     A \emph{deformation} of a $\kk$-algebra $A$ to a $\kk$-algebra $B$ is a formal deformation $\calAhat$ as in Definition~\ref{SWbook} such that
    $\calAhat \otimes_{\kkt} \kktpuis$ is isomorphic to $\ttpuis{B} := B\otimes_{\kk} \kktpuis$, where $\kktpuis$ is the field of Puiseux series.
    \begin{enumerate}
        \item A deformation is \emph{split} if $\calAhat[t^{-1}]$ is isomorphic to $\ttfrac{B}$,
        \item A deformation is \emph{polynomially split} it is of polynomial
            type and the associated $\kk[t]$-algebra $\calA$ satisfies
            $\calA[t^{-1}]\simeq B[t^{\pm 1}]$.
    \end{enumerate}
     Assume $\kk = \mathbb{C}$.
     A \emph{strongly flat deformation} (introduced in~\cite[p.1]{Wemyss2}) of $A$ to $B$ is a convergent formal deformation $(A_t, \circ{}, +)$ of $A$ such that
 $A_{t = s_i}$ is isomorphic to $B$ for some sequence of $s_i\in \mathbb{C}$ converging to zero.
\end{definition}
There are some logical dependencies between the notions in this definition.
An isomorphism $\calAhat[t^{-1}]\simeq \ttfrac{B}$ yields an isomorphism
\[
    \calAhat \otimes_{\kkt} \kktpuis\simeq \calAhat[t^{-1}]\otimes_{\kktfrac} \kktpuis\simeq \ttfrac{B}\otimes_{\kktfrac} \kktpuis \simeq  \ttpuis{B},
\]
so being a split deformation implies being a deformation. Similarly, being a polynomially split deformation implies being a split deformation.
At this point it is not clear that a strongly flat deformation is a deformation. We will prove this eventually,
see Theorem~\ref{J1}.

\begin{lemma}[Adding roots of $t$]\label{ref:finiteLevel:lem}
    Let $\calAhat$ be a deformation of $A$ to $B$. Then there is an integer $n\geq 1$ such that
    if we put $\calAhat' := \frac{\calAhat[u]}{u^n - t}$, then $\calAhat'[u^{-1}]\simeq B(\!(u)\!)$ as $\kk(\!(u)\!)$-algebras.
    In particular, if there is a deformation of $A$ to $B$, then there is a split deformation of $A$ to $B$.
\end{lemma}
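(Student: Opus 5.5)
The key point is that an isomorphism $\calAhat\otimes_{\kkt}\kktpuis\simeq \ttpuis{B}$ involves only finitely many Puiseux series, and these all live in a single finite extension $\kk(\!(t^{1/n})\!)$ of $\kktfrac$. We then realize that extension through $u=t^{1/n}$.

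First I fix bases. Let $a_1,\dots,a_d$ be a $\kkt$-basis of $\calAhat$, and $b_1,\dots,b_d$ a $\kk$-basis of $B$. An isomorphism
\[
\varphi\colon \calAhat\otimes_{\kkt}\kktpuis\to \ttpuis{B}
\]
of $\kktpuis$-algebras is determined by the images $\varphi(a_i)=\sum_j c_{ij} b_j$ with $c_{ij}\in\kktpuis$. The inverse matrix $(c_{ij})^{-1}$ has entries $c'_{ij}\in\kktpuis$. By the description~\eqref{eq:Puiseux} of $\kktpuis$ as an increasing union, each of the finitely many elements $c_{ij}, c'_{ij}$ lies in some $\kk(\!(t^{1/n_{ij}})\!)$. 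So all of them lie in $\kk(\!(t^{1/n})\!)$, where $n$ is a common multiple of these $n_{ij}$.

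Next I identify the rings. Since $\calAhat$ is free over $\kkt$, we have $\calAhat'=\calAhat\otimes_{\kkt}\kkt[u]/(u^n-t)$, and $\kkt[u]/(u^n-t)\simeq\kk[\![u]\!]$. Thus $\calAhat'$ is a free $\kk[\![u]\!]$-module with basis $a_1,\dots,a_d$, and
\[
\calAhat'[u^{-1}]\simeq\calAhat\otimes_{\kkt}\kk(\!(u)\!),
\]
where $\kk(\!(u)\!)$ is identified with $\kk(\!(t^{1/n})\!)\subseteq\kktpuis$ via $u=t^{1/n}$.

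Now I descend $\varphi$. Define $\psi\colon\calAhat'[u^{-1}]\to B(\!(u)\!)$ as the $\kk(\!(u)\!)$-linear map sending $a_i\mapsto\sum_j c_{ij}b_j$. This is well defined because every $c_{ij}\in\kk(\!(u)\!)$. After the injective base change $-\otimes_{\kk(\!(u)\!)}\kktpuis$, the map $\psi$ becomes $\varphi$. Two checks remain.
\begin{itemize}
\item $\psi$ is multiplicative. The identity $\psi(a_ia_k)=\psi(a_i)\psi(a_k)$ is an equality in the free module $B(\!(u)\!)\subseteq\ttpuis{B}$, and it holds after base change.
\item $\psi$ is bijective. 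Its matrix is invertible, with inverse $(c'_{ij})$ having entries in $\kk(\!(u)\!)$.
\end{itemize}
Unitality follows the same way. This gives the isomorphism $\calAhat'[u^{-1}]\simeq B(\!(u)\!)$.

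For the final claim, $\calAhat'$ is a free $\kk[\![u]\!]$-algebra with $\calAhat'/(u)\simeq\calAhat/(t)\simeq A$, so it is a formal deformation of $A$ in the variable $u$. The isomorphism above says this deformation is split. Renaming $u$ as $t$ gives a split deformation of $A$ to $B$.

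I expect no serious obstacle here. The only point needing care is that the embedding $\kk(\!(u)\!)\to\kktpuis$ given by $u\mapsto t^{1/n}$ is compatible with the $\kkt$-structure, that is, $u^n=t$. With that, every identity can be checked inside $\ttpuis{B}$, using that base change along a field extension is faithful.
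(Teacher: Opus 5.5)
Your proposal is correct and follows essentially the same route as the paper: fix bases, observe that the finitely many Puiseux coefficients of the isomorphism lie in a single $\kk(\!(t^{1/n})\!)\simeq\kk(\!(u)\!)$, and use the same matrix to define the isomorphism $\calAhat'[u^{-1}]\to B(\!(u)\!)$. Your version is somewhat more careful than the paper's. You check multiplicativity via the injective base change, and you correctly write $\calAhat'[u^{-1}]$ where the paper writes $\calAhat'$. Tracking the inverse entries $c'_{ij}$ separately is harmless but unnecessary: a matrix over the field $\kk(\!(u)\!)$ whose determinant is nonzero in $\kktpuis$ is already invertible over $\kk(\!(u)\!)$.
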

\begin{proof}
    Consider the composition $f\colon \calAhat \into \calAhat\otimes_{\kkt} \kktpuis \to \ttpuis{B}$, where the first map
    is the canonical one, that is, it maps $a\in \calAhat$ to $a\otimes 1$. Fix elements $a_1, \ldots ,a_d$ such that
    $\calAhat = \kkt a_1 \oplus \ldots \oplus \kkt a_d$. Fix a basis $b_1, \ldots ,b_d$ of the $\kk$-linear space $B$.
    For every $i=1, \ldots ,d$, the element $f(a_i)$ can be written as $\sum_{j=1}^d b_j g_{i,j}$ for some $g_{i,j}\in \kktpuis$.
    By~\eqref{eq:Puiseux}, we can fix a large enough $n$ so that $g_{i,j}\in \kk(\!(t^{1/n})\!)$ for every $i,j=1, \ldots,d$. Define
    $\calAhat'$ as in the statement of the Lemma.
    Observe that $\kk(\!(t^{1/n})\!) \simeq  \kk(\!(u)\!)$.
    We define an isomorphism $\calAhat' \to B(\!(u)\!)$ which sends every $a_i$ to $\sum_{j=1}^d b_j g_{i,j}\in B(\!(u)\!)$.
    The $\kk[\![u]\!]$-algebra $\calAhat'$ is a free $\kk[\![u]\!]$-module and $\calAhat'/(u)\simeq \calAhat/(t)\simeq A$, so $\calAhat'$
    is the required deformation.
\end{proof}

   \begin{example}\label{ex:parabola}
       Take $A = \mathbb{C}[x]/(x^2)$ and $B = \mathbb{C} \times \mathbb{C}$.
       Take $A_t := \mathbb{C}[\![t]\!][x]/(x^2 - t)$. It can be interpreted as a convergent formal deformation (in fact,
       a polynomial one) as follows: we pick a basis $1$, $x$ of $A$ and
       consider the multiplication on $\calAhat = A[\![t]\!]$ given by
       \[
           x\circ{} x := t\cdot 1.
       \]
       It is also a deformation, because
       \[
           \calAhat\otimes_{\kkt} \kktpuis\simeq \frac{\kktpuis[x]}{(x^2 - t)}.
       \]
       The element $t$ in the field $K = \kktpuis$ is a square: $t = (t^{1/2})^2$, so by Chinese remainder theorem, we have
       \[
           \frac{K[x]}{(x^2 - t)} = \frac{K[x]}{(x - t^{1/2})\cdot (x + t^{1/2})}\simeq K \times K
       \]
       which is isomorphic to $B\otimes_{\kk} K$. This shows that $\calAhat$ is a deformation.
       Applying Lemma~\ref{ref:finiteLevel:lem} to this deformation, we see that we
       case take $n = 2$ and $u = t^{1/2}$. However, some $n > 1$ is necessary
       in
       Lemma~\ref{ref:finiteLevel:lem} even in this special case: we have that
       $\calAhat[t^{-1}] = \frac{\kktfrac[x]}{(x^2 - t)}\simeq \kk(\!(t^{1/2})\!)$ is a field, so
       $\calAhat[t^{-1}]$ is not isomorphic to $\kktfrac\times \kktfrac$.
   \end{example}

    \subsection{Moduli space of finite-dimensional algebras}

    The moduli space of finite-dimensional algebras is a classical object, studied 
    for example in~\cite{Artin__Azumaya, Procesi, Voigt, Gabriel, poonen_moduli_space}.
    Let $V = \kk \oplus V'$, where $V' = \kk^{\oplus d-1}$ is a fixed
    $d-1$-dimensional $\kk$-vector space. In this section we contemplate the
    easy observation that multiplication on $V$ can be encoded in terms of
    multiplication constants. Let $e_1 = (1, 0,  \ldots ,0), e_2 = (0, 1,
    0, \ldots ,0), \ldots , e_d$ be a basis of $V$.

    We introduce a tuple of $d^3$ variables $(\lambda_{ij}^k)_{1\leq i,j,k\leq
    d}$ and define the \emph{associativity equations}
    as equations, for every $i,j,l,m$.
    \[
        \sum_{k=1}^d \lambda_{ij}^{k}\lambda_{kl}^m - \sum_{k=1}^d
        \lambda_{ik}^m\lambda_{jl}^k = 0
    \]
    and the \emph{unit equations} for every $j, k$ by
    \begin{equation}\label{eq:unitEquations}
        \lambda_{j1}^k = \lambda_{1j}^k = \begin{cases}
            1 & \mbox{ if } j = k\\
            0 & \mbox{otherwise.}
        \end{cases}
    \end{equation}
    Let us fix $\lambda_{*1}^{*}$ and $\lambda_{1*}^*$ as in~\eqref{eq:unitEquations},
    and we define the \emph{moduli space of $d$-dimensional algebras} as the (affine scheme corresponding
    to) the commutative quotient algebra
        \[
            \cM_d = \frac{\kk[\lambda_{ij}^k\ |\ 2\leq i,j\leq d,1\leq k\leq d]}{\left( \sum_{k} \lambda_{ij}^{k}\lambda_{kl}^m - \sum_{k}
            \lambda_{ik}^m\lambda_{jl}^k\ |\ 1\leq i,j,l,m\leq d
           \right)}.
    \]
    \begin{proposition}\label{prop:associativity}
        Let $*\colon V \times V\to V$ be a $\kk$-bilinear operation.
        Write
        \[
            e_i * e_j = \sum_{k} s_{ij}^k e_k,
        \]
        for every $1\leq i,j,k\leq d$.
        Then $*$ defines an associative multiplication if and only if
        $[s_{ij}^k]$ satisfies associativity equations.
        Similarly, $e_1\in V$ is a unit for $*$ if and only if $[s_{ij}^k]$
        satisfies the unit equations.

        Therefore, the set of associative and unital (with unit $e_1$)
        operations $*$ is in bijection with $\kk$-algebra homomorphisms
        $\cM_d\to \kk$.
    \end{proposition}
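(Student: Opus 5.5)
Since $*$ is $\kk$-bilinear, it is determined by its values on pairs of basis vectors, so every statement reduces to a check on basis elements $e_i,e_j,e_l$. First I expand both sides of $(e_i * e_j) * e_l = e_i * (e_j * e_l)$ in the basis. Using bilinearity,
\[
(e_i*e_j)*e_l = \sum_k s_{ij}^k\, e_k * e_l = \sum_m \Bigl(\sum_k s_{ij}^k s_{kl}^m\Bigr) e_m,
\qquad
e_i*(e_j*e_l) = \sum_m \Bigl(\sum_k s_{jl}^k s_{ik}^m\Bigr) e_m .
\]
Comparing coefficients of $e_m$, associativity on the triple $(e_i,e_j,e_l)$ is exactly the associativity equation indexed by $i,j,l,m$. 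Associativity on basis triples implies associativity on all triples, because the associator $(a,b,c)\mapsto (a*b)*c - a*(b*c)$ is trilinear. This gives the first equivalence.

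The unit claim is handled in the same way. The map $a\mapsto e_1*a - a$ is linear, so $e_1$ is a left unit if and only if $e_1*e_j = e_j$ for all $j$, that is, $s_{1j}^k=\delta_{jk}$. The right-unit condition is symmetric, $s_{j1}^k=\delta_{jk}$. Together these are precisely the unit equations~\eqref{eq:unitEquations}.

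For the bijection, a $\kk$-algebra homomorphism $\cM_d\to\kk$ is the same as a point, that is, an assignment of values in $\kk$ to the variables $\lambda_{ij}^k$ with $2\le i,j\le d$, which kills the defining ideal. Here the generators of the ideal are the associativity polynomials, with the entries $\lambda_{1*}^*$ and $\lambda_{*1}^*$ replaced by the fixed constants from~\eqref{eq:unitEquations}. Given such a point, I extend it by these constants and take $*$ with $s_{ij}^k=\lambda_{ij}^k$. By the two equivalences above, $*$ is associative with unit $e_1$. Conversely, given such an operation, its structure constants satisfy the unit equations, so they are determined by the entries with $i,j\ge2$. Their associativity equations then say that evaluating at these entries kills the ideal. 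The two assignments are inverse to each other, since an operation and its structure constants determine each other.

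There is no substantial obstacle. The only point needing care is the bookkeeping: in $\cM_d$ the variables with an index equal to $1$ have been specialised to constants, and the quotient still imposes all associativity equations, including those involving index $1$. Under this specialisation the latter become polynomials in the remaining variables, so they are either satisfied automatically or remain in the ideal. In both cases the correspondence is unaffected.
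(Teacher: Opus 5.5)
Your proposal is correct and follows the same route as the paper: expand $(e_i*e_j)*e_l$ and $e_i*(e_j*e_l)$ in the basis, compare coefficients of $e_m$, and treat the unit condition the same way. You also spell out two steps the paper leaves implicit: the trilinearity reduction, and the bookkeeping for the bijection with $\kk$-algebra homomorphisms $\cM_d\to\kk$. On the latter, the associativity equations with an index equal to $1$ in fact vanish identically after specialisation, so the second alternative in your hedge never occurs.
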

    \begin{proof}
        Quite tautological.
        For indices $1\leq i, j\leq d$ we have
        \begin{align*}
            (e_i * e_j) * e_l &= \sum_{k} s_{ij}^k e_k*e_l = \sum_m\sum_{k}
            s_{ij}^ks_{kl}^m e_m\\
            e_i * (e_j * e_l) &= \sum_{k} e_i * e_k s_{jl}^k=\sum_m\sum_k
            s_{ik}^m s_{jl}^k e_m.
        \end{align*}
        Therefore, $*$ is an associative multiplication if and only if the
        tuple $[s_{ij}^k]$ satisfies the associativity equations. Similarly for $e_1$
        being a unit.
    \end{proof}

    More importantly, Proposition~\ref{prop:associativity} generalises to coefficients in rings other than $\kk$.
    \begin{proposition}\label{prop:associativityRelative}
        Let $R$ be a commutative $\kk$-algebra. Let $V_R :=
        V\otimes_{\kk} R$ be a free $R$-module of rank $d$.
        Let $*\colon V_R \times V_R\to V_R$ be a $R$-bilinear operation.
        Write
        \[
            e_i * e_j = \sum_{k} s_{ij}^k e_k,
        \]
        for every $1\leq i,j,k\leq d$ where $s_{ij}^k\in R$.
        Then $*$ defines an associative multiplication if and only if
        $[s_{ij}^k]$ satisfies associativity equations.
        Similarly, $e_1\in V$ is a unit for $*$ if and only if $[s_{ij}^k]$
        satisfies the unit equations.

        Therefore, the set of associative and unital (with unit $e_1$)
        operations $*$ on $V_R$ is in bijection with $\kk$-algebra homomorphisms
        $\cM_d\to R$.
    \end{proposition}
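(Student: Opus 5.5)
The proof repeats the computation in the proof of Proposition~\ref{prop:associativity}, now over $R$, and then uses the universal property of the polynomial ring. Since $*$ is $R$-bilinear and $e_1, \ldots, e_d$ is an $R$-basis of $V_R$, associativity of $*$ is equivalent to $(e_i*e_j)*e_l = e_i*(e_j*e_l)$ for all basis triples. Expanding both sides with $R$-bilinearity gives
\begin{align*}
    (e_i * e_j) * e_l &= \sum_m\Bigl(\sum_{k} s_{ij}^ks_{kl}^m\Bigr) e_m,\\
    e_i * (e_j * e_l) &= \sum_m\Bigl(\sum_k s_{ik}^m s_{jl}^k\Bigr) e_m.
\end{align*}
Because $V_R$ is free on $e_1,\ldots,e_d$, the two sides agree exactly when the coefficients of each $e_m$ agree in $R$. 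That is precisely the associativity equations evaluated at $[s_{ij}^k]$.

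The unit statement is handled in the same way. By bilinearity, $e_1$ is a two-sided unit if and only if $e_1*e_j = e_j = e_j*e_1$ for all $j$. Comparing coefficients in the free basis turns this into the unit equations~\eqref{eq:unitEquations}.

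For the bijection, recall that $\kk$-algebra homomorphisms from a polynomial ring $\kk[\lambda_{ij}^k]$ to the commutative $\kk$-algebra $R$ correspond exactly to tuples of elements of $R$. Such a homomorphism factors through the quotient $\cM_d$ if and only if the tuple kills the generators of the ideal, namely the associativity equations. The remaining coordinates $\lambda_{1*}^*$ and $\lambda_{*1}^*$ are substituted by their prescribed values $0$ and $1$.

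So a unital associative $*$ gives the tuple $(s_{ij}^k)_{i,j\geq 2}$, which satisfies all associativity equations once the unit-index constants are substituted, and hence a homomorphism $\cM_d\to R$. Conversely, a homomorphism gives values $s_{ij}^k\in R$ for $i,j\geq 2$. We complete these by the unit values and define $*$ on $V_R$ by $R$-bilinear extension. This operation is unital by construction and associative by the first part.

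The two assignments are mutually inverse, because a bilinear operation is determined by its structure constants on the basis. The only point needing any care is commutativity of $R$. It is what makes the expansion above legitimate, since scalars must commute past basis elements, and it is what lets the universal property of the commutative polynomial ring apply. No other step presents an obstacle.
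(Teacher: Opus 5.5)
Your proposal is correct and follows the paper's route. The paper gives no separate proof of this relative version; it relies on the same basis computation of $(e_i*e_j)*e_l$ and $e_i*(e_j*e_l)$ from Proposition~\ref{prop:associativity}, now with coefficients in $R$, and your treatment of the bijection (universal property of the polynomial ring, with the unit-index constants substituted) simply spells out the step the paper calls tautological.
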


    \begin{example}
        Let $\calAhat$ be a formal deformation as in Remark~\ref{ref:twoConventions:rem}.
        Let $d = \dim_{\kk} \calAhat/(t)$.
        We can identify $\calAhat$ with $V[\![t]\!]$ and hence obtain a homomorphism $\cM_d \to \calAhat$.
        Similarly for other types of deformations.
    \end{example}

    \subsection{Zariski topology on the moduli space}

        The algebra $\cM_d$ is commutative, hence comes with its spectrum
        \[
            \Spec \cM_d = \left\{ \pp \subseteq \cM_d\ |\ \pp\mbox{ prime ideal}
        \right\}\]
        and $\kk$-point spectrum
        \[
            \Speck \cM_d = \left\{ \mm \subseteq \cM_d\ |\ \kk\to
                \cM_d/\mm\mbox{ bijective} \right\}\subseteq \Spec \cM_d.
        \]
        As a set, the $\kk$-point spectrum $\Speck \cM_d$ is a subset of
        \[
            \Speck \kk[\lambda_{ij}^k\ |\ 2\leq i,j\leq d,1\leq k\leq d] \simeq
            \kk^{d(d-1)^2}, 
        \]
        hence
        $\Speck \cM_d$ is a fancy name for the set of all possible structure
        constants.
        In contrast, the set $\Spec \cM_d$ is very complicated and impossible
        to describe in general.
        Both $\Spec \cM_d$ and $\Speck \cM_d$ come with Zariski topology, which is
        not easy to describe explicitly. We omit the definition and
        instead give the most important consequence.

        \begin{proposition}\label{ref:Yoneda:prop}
            Let $f\colon \cM_d\to R$ be any homomorphism of commutative
            $\kk$-algebras. The operation $f^*\colon \qq \mapsto f^{-1}(\qq)$ induces
            \emph{continuous} maps $f^*$
            \[
                \begin{tikzcd}
                    \Speck R\ar[d, hook]\ar[r, "f^*"] & \Speck \cM_d\ar[d,
                    hook]\\
                    \Spec R\ar[r, "f^*"] &\Spec \cM_d
                \end{tikzcd}
            \]
        \end{proposition}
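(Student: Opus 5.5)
The statement has two parts. First, that $f^*$ is well defined, i.e.\ that it sends $\Speck R$ into $\Speck \cM_d$ and $\Spec R$ into $\Spec \cM_d$. Second, that these maps are continuous and make the square commute. I will use the standard Zariski topology: closed sets of $\Spec S$ are $V(I) = \{\pp \supseteq I\}$ for ideals $I\subseteq S$, and $\Speck S$ carries the subspace topology.

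\emph{Well-definedness.} For a prime $\qq\subseteq R$, the preimage $f^{-1}(\qq)$ is the kernel of the composite $\cM_d\to R\to R/\qq$. The target is a domain, so $\cM_d/f^{-1}(\qq)$ embeds in a domain and is itself a domain; moreover $f^{-1}(\qq)\neq \cM_d$, because $f(1)=1\notin\qq$. Hence $f^{-1}(\qq)$ is prime. For $\mm\in\Speck R$ we have $R/\mm\simeq \kk$ via the structure map, and $\cM_d/f^{-1}(\mm)$ injects into $R/\mm\simeq\kk$. The composite $\kk\to\cM_d/f^{-1}(\mm)\to R/\mm$ is a bijection, because $f$ is a $\kk$-algebra map. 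So the first map $\kk\to\cM_d/f^{-1}(\mm)$ is injective. The second map is injective, and the composite is surjective, so the second map is also surjective and hence bijective. Therefore the first map is bijective too, and $f^{-1}(\mm)\in\Speck\cM_d$.

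\emph{Commutativity.} The square commutes tautologically: both vertical arrows are inclusions, and both horizontal arrows are given by the same formula $\qq\mapsto f^{-1}(\qq)$.

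\emph{Continuity.} For an ideal $I\subseteq\cM_d$, I claim that
\[
(f^*)^{-1}(V(I)) = V(f(I)R).
\]
Indeed, $f^{-1}(\qq)\supseteq I$ holds iff $\qq\supseteq f(I)$, which holds iff $\qq\supseteq f(I)R$, since $\qq$ is an ideal. So preimages of closed sets are closed in $\Spec$. Intersecting with $\Speck R$ gives continuity on $\kk$-points, because the topology there is the subspace topology.

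There is no real obstacle here. The only point needing care is the $\kk$-point condition, which uses that $f$ is a $\kk$-algebra homomorphism.
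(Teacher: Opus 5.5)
Your proof is correct. The paper states this proposition without proof and even omits the definition of the Zariski topology. Your verification (primes pull back to primes, $\kk$-points pull back to $\kk$-points because $f$ is a $\kk$-algebra map, and $(f^*)^{-1}(V(I)) = V(f(I)R)$) is exactly the standard argument the paper implicitly relies on.
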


        The proposition is very useful, as for specific $R$ we understand well
        $\Spec R$ and $\Speck R$ both as sets and as topological spaces.
        \begin{example}
            Take $R = \kk[t]$. Then $\Speck R = \kk$, where the bijection sends
            $s\in \kk$ to the ideal $(t - s)$. The topology is cofinite, that
            is, the only closed sets are the whole space and all finite
            subsets. The set $\Spec R$ is slightly larger: it contains the
            zero ideal $(0)$.
        \end{example}

        \begin{example}\label{ex:SpecSeries}
            Take $R = \kkt$. Then $\Spec R = \{(t), (0)\}$. The topology has
            closed sets $\emptyset,\ \{(t)\},\ \Spec R$. The point $(t)$ is closed  in $\Spec R$ and 
            called the \emph{special point}, while the point $(0)$ is open in $\Spec R$ and called
            the \emph{generic point}.
        \end{example}

        \begin{example}\label{ex:SpecConvergent}
            Fix a number $r > 0$. The spectrum $\Spec_{\mathbb{C}} \Ct{r}$
            contains the open disc $\Delta_r := \left\{ z\in \mathbb{C}\ |\ |z|
            < r \right\}$. A point $s\in \Delta_r$ corresponds to the
            evaluation map $\Ct{r} \to \mathbb{C}$ that sends a series
            $\sum_{i\geq 0} \lambda_i t^i$ to $\sum_{i\geq 0} \lambda_i s^i$.
            Consider a Zariski-closed subset of $Z \subseteq\Spec_{\mathbb{C}} \Ct{r}$. It is either the whole
            $\Spec_{\mathbb{C}} \Ct{r}$ or $Z$ is contained in the vanishing set of a nonzero element $f\in \Ct{r}$.
            In the latter case, $Z\cap \Delta_r\subseteq \left\{ z\in \Delta_r\ |\ f(z) = 0 \right\}$ and $Z\cap \Delta_r$ is closed
            in the Euclidean topology. Since $f$ is analytic on
            $\Delta_r$, we see that $Z\cap \Delta_r$ has no accumulation
            points.
        \end{example}

        For $\pp \in \Spec \cM_d$, the ring $\cM_d/\pp$ is an integral domain,
        hence it has a field of fractions $\kappa(\pp) := \Frac(\cM_d/\pp)$
        and an associated homomorphism $\cM_d\to \kappa(\pp)$, which yields a
        $\kappa(\pp)$-algebra structure on $V_{\kappa(\pp)}$.

    \subsection{Comparing various notions of deformations}

    \newcommand{\kappabar}{\overline{\kappa}}%
    \newcommand{\GL}{\operatorname{GL}}%

        Consider a finite-dimensional $\kk$-algebra $A$, not necessarily commutative, with $d = \dim_{\kk} A$.
        Consider also a field $\kk\subseteq\kappa$ and a homomorphism $f\colon \cM_d\to
        \kappa$.
        By Proposition~\ref{ref:Yoneda:prop}, to have $f$ is the same as to fix an associative algebra
        structure $(V_{\kappa}, *)$
        on $V_{\kappa}$.
        We would like to have a way of saying that $(V_{\kappa}, *)$ lies in
        the orbit of $A$, or in other words, that $(V_{\kappa}, *)$ and $A$
        are isomorphic. As stated, this makes no sense, as $\kappa$ might be
        much larger than $\kk$.

        This is amended by tensoring, similarly as we did in Definition~\ref{flat}.
        Let $\kappabar$ be the algebraic closure of $\kappa$.
        We say that $f$ \emph{is a point of the orbit of $A$} if there is an isomorphism of
        $\kappabar$-algebras $(V_{\kappabar}, *)$ and $A\otimes_{\kk} \kappabar$.

        \begin{theorem}\label{thm:gabriel}
            Consider the set $O_A$ of points $\pp\in \Spec \cM_d$ such that $\cM_d\to
            \kappa(\pp)$ is a point of the orbit of $A$. This is a locally closed
            subset of $\Spec \cM_d$ (which means that there is an Zariski-open subset
            $U\subseteq \cM_d$ such that $O_A \subseteq U$ is Zariski-closed).

            If $A$ is semisimple, then $O_A$ is open.
        \end{theorem}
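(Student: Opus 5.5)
The plan is to realise $O_A$ as the image of an orbit map of the group $G := \GL(V')$, or more precisely of the group of linear automorphisms of $V = \kk\oplus V'$ fixing $e_1$. This group is a smooth connected affine group scheme $G$ that acts on $\Spec \cM_d$ by transport of structure: $g\cdot *$ is the multiplication $(v,w)\mapsto g(g^{-1}v * g^{-1}w)$.

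First we fix a $\kk$-point $x_A\in \Speck\cM_d$ representing $A$. Such a point exists: choose a basis of $A$ with first element $1$ and apply Proposition~\ref{prop:associativity}. We then consider the orbit morphism
\[
\phi\colon G\to \Spec\cM_d,\qquad g\mapsto g\cdot x_A .
\]

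Next we check that $O_A$ is exactly the set-theoretic image of $\phi$, that is, of $G_{\kappabar}$-points for all field extensions. In one direction, a point $\pp$ lies in $O_A$ if and only if there is a $\kappabar$-algebra isomorphism $A\otimes\kappabar\simeq (V_{\kappabar},*)$. Such an isomorphism sends $1$ to $e_1$, so it is given by some $g\in G(\kappabar)$ with $g\cdot x_A = $ the $\kappabar$-point lying over $\pp$. Conversely, any point in the image gives such an isomorphism. Hence $O_A=\phi(G)$.

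By Chevalley's theorem, $\phi(G)$ is constructible. Since $G$ is irreducible, the closure $\overline{O_A}$ is irreducible, and $O_A$ contains a dense open subset $W$ of $\overline{O_A}$. Translates $gW$ for $g\in G(\kk)$ lie in $O_A$ and cover all $\kk$-points of $O_A$, by transitivity. To cover non-closed points I would argue with the base change to $\kappabar$, or alternatively use the standard fact that an orbit of an algebraic group action is open in its closure. This gives the standard conclusion that $O_A$ is open in $\overline{O_A}$, hence locally closed. This transitivity-over-arbitrary-fields step is the main technical point. I would handle it by noting that $O_A$ is stable under the $G$-action on the scheme, so $O_A\setminus W$ is a constructible $G$-stable subset not containing the generic point of $\overline{O_A}$. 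It therefore contains no point of the orbit, since every point of $O_A$ specialises from the generic one within $\phi(G)$.

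For the semisimple case, I would use rigidity: $\HH^2(A,A)=0$ for semisimple $A$ in characteristic zero, since $A$ is separable. The tangent space to $\cM_d$ at $x_A$ is the space of normalised Hochschild $2$-cocycles, and the tangent space to the orbit is the space of coboundaries. So the orbit map is smooth onto $\Spec\cM_d$ near $x_A$, giving $\dim_{x_A} O_A \geq \dim T_{x_A}\cM_d$. Hence $O_A$ contains an open neighbourhood of $x_A$, and by $G$-translation $O_A$ is open around every $\kk$-point. The non-closed points then follow as before. An alternative, more elementary route, which I would keep as a backup, is that semisimplicity is an open condition: nondegeneracy of the trace form $\operatorname{tr}(L_aL_b)$ is given by the non-vanishing of a determinant. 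Moreover, semisimple algebras of dimension $d$ over algebraically closed fields come in finitely many types, and these are separated by dimensions of centre-type invariants that are locally constant on the semisimple locus.
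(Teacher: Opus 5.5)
Your proposal follows the paper's route. The paper also identifies $O_A$ with the image of the orbit map. It writes $\operatorname{GL}(V)$, but since $\cM_d$ fixes $e_1$ as the unit, your stabiliser of $e_1$ is the group that actually acts. The paper gets local closedness from the standard fact that orbits are locally closed (citing Brion). It gets openness in the semisimple case from Gabriel's Corollary~2.5, which is the rigidity statement ``$\HH^2(A,A)=0$ implies $O_A$ open'' that you sketch.

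\textbf{The step that fails as written: non-closed points.} You argue via $O_A\setminus W$, and this does not work for two reasons.
\begin{enumerate}
\item $O_A\setminus W$ is not $G$-stable, since $W$ is just some dense open subset of $\overline{O_A}$.
\item Knowing that a point of $O_A$ specialises from the generic point of $\overline{O_A}$ does not put it in $W$: open sets are stable under generisation, not specialisation.
\end{enumerate}
Replace it as follows. Set $U:=\bigcup_{g\in G(\kk)}gW$. This is open in $\overline{O_A}$, contained in $O_A$, and contains every closed point of $O_A$ by your transitivity argument. So $O_A\setminus U$ is a constructible subset of the finite-type $\kk$-scheme $\Spec\cM_d$ with no closed points. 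It is therefore empty, because such schemes are Jacobson. This is also what your ``as before'' must mean in the semisimple case.

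\textbf{State the smoothness criterion in the semisimple case.} $G$ is smooth and $d\phi_e$ is surjective onto $T_{x_A}\cM_d=Z^2_{\mathrm{norm}}(A,A)$. Surjectivity onto normalised cocycles needs a small check: if a normalised cocycle equals $\delta f$, evaluating at $(1,1)$ gives $f(1)=0$, so it lies in the image of $\operatorname{Lie}G$. Hence $\phi$ is smooth, and thus open, near $e$. This gives a neighbourhood of $x_A$ inside $O_A$ without knowing in advance that $\cM_d$ is reduced.

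\textbf{Your elementary backup route is not a proof as it stands.} $\dim Z$ only counts simple factors and does not separate types. For example, $\mathbb{M}_5(\kk)^{\times 2}$ and $\mathbb{M}_7(\kk)\times\kk$ both have dimension $50$ and a $2$-dimensional centre. Moreover, local constancy of such invariants on the semisimple locus is essentially the openness you are trying to prove.
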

        \begin{proof}
            The group $\GL(V)$ acts on $\cM_d$, and hence on $\Spec \cM_d$. 
            Fix any point $[(V, \circ)]\in \Spec_{\kk} \cM_d$ in the orbit of $A$ and
            consider the map $\GL(V) \to \Spec \cM_d$ given by $g\mapsto g\cdot (V, \circ{})$.
            The image of this map is locally closed by general properties of algebraic group actions
            \cite[Proposition 1.11]{Brion}. A point $\cM_d\to \kappa$ with algebra $(V_{\kappa}, *)$ belongs to the image if and only
            if after passing to the algebraic closure $\kappabar$, we have a matrix $g\in \GL(V_{\kappabar})$
            that proves isomorphism of $(V_{\kappabar}, *)$ and $(V_{\kappabar}, \circ{})$. This concludes
            the general part of the argument.

            The fact that $O_A$ is open for $A$ semisimple follows from~\cite[Corollary~2.5]{Gabriel}.
        \end{proof}

        \begin{definition}\label{def:deformationTopological}
            Let $A$, $B$ be two $\kk$-algebra structures on $V$. Let $f_A,
            f_B\colon \cM_d\to \kk$ be the two corresponding homomorphisms and
            $f_A, f_B\in \Speck \cM_d$ be the two corresponding points. We say that
            \emph{$A$ topologically deforms to $B$} if
            $f_A$ lies in the Zariski-closure of the orbit of $f_B$.
        \end{definition}

        The following transitivity result is nearly obvious on the topological
        side, but its algebraic counterpart (in the setting of Definition~\ref{flat}) is highly
        nontrivial; we believe that any proof of it would employ topology, perhaps in a hidden way. Below
        (Proposition~\ref{ref:degenerationsAlgebraically:prop},
        Theorem~\ref{J1}) we prove that both sides agree, hence we obtain
        transitivity on the algebraic side; this is the whole rationale for
        introducing the topological one.
        \begin{proposition}[Transitivity of deformations]\label{ref:transitivity:prop}
            In the setting of Definition~\ref{def:deformationTopological}, the following hold
            \begin{enumerate}
                \item $A$ topologically deforms to $B$ if and only if the orbit $O_A$ lies in the closure of the orbit
                    $O_B$. In particular, Definition~\ref{def:deformationTopological} does not depend
                    on the choice of $f_A$, $f_B$.
                \item if $A$ topologically deforms to $B$ and $B$ topologically deforms to $C$, then $A$ topologically deforms to $C$.
            \end{enumerate}
        \end{proposition}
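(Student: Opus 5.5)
The plan is to use only two facts: the orbits are $\GL(V)$-orbits (as in the proof of Theorem~\ref{thm:gabriel}), and every element of $\GL(V)$ acts on $\Spec \cM_d$ by a homeomorphism. I will work with $\kk$-points: for the $\kk$-point statements of Definition~\ref{def:deformationTopological}, $O_A$ is replaced by $O_A\cap \Speck \cM_d$, which is the $\GL(V)(\kk)$-orbit of $f_A$. The action is by automorphisms of the affine scheme, coming from the algebraic action on structure constants $\lambda_{ij}^k$, restricted to the subgroup fixing $e_1$, or conjugated back to it. Hence each $g\in \GL(V)(\kk)$ maps closed sets to closed sets.

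For (1), first assume $f_A\in \overline{O_B}$. For $g\in\GL(V)(\kk)$, the map $g$ is a homeomorphism with $g(O_B)=O_B$, so
\[
g\cdot f_A\in g(\overline{O_B})=\overline{g(O_B)}=\overline{O_B}.
\]
Hence every $\kk$-point of $O_A$ lies in $\overline{O_B}$. Conversely, if $O_A\subseteq\overline{O_B}$, then in particular $f_A\in\overline{O_B}$.

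Independence of choices then follows. A different choice $f_A'$ lies in $O_A$ and gives the same condition. A different choice $f_B'$ satisfies $O_{B'}=O_B$, so the closure is unchanged. If one insists on the scheme-theoretic orbit in $\Spec\cM_d$, I would note that $\kk$ is algebraically closed and the orbit is locally closed and constructible by Theorem~\ref{thm:gabriel}. So its closure equals the closure of its $\kk$-points, since these are dense in a constructible set of a finite type $\kk$-scheme, and the argument transfers.

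For (2), suppose $O_A\subseteq\overline{O_B}$ and $O_B\subseteq\overline{O_C}$. Taking closures of the second inclusion gives $\overline{O_B}\subseteq\overline{O_C}$. Therefore $O_A\subseteq\overline{O_C}$, and by (1) this means $A$ topologically deforms to $C$.

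The only step that requires care is justifying that the action of $g$ is by homeomorphisms compatible with the unit normalisation $e_1$. I would handle this by letting $\GL(V)$ act through the stabiliser of $e_1$, or by transport of structure, so that each $g$ induces a ring automorphism of $\cM_d$. Proposition~\ref{ref:Yoneda:prop} then makes the induced map continuous, and its inverse is induced by $g^{-1}$, so it is a homeomorphism.
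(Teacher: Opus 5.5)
Your proof is correct and follows the paper's own argument. For (1) you show that $\overline{O_B}$ is stable under the group action and contains $f_A$, hence contains $O_A$. For (2) you take closures of $O_B\subseteq\overline{O_C}$.

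Your extra remarks fill in details the paper leaves implicit: letting the stabiliser of $e_1$ act by automorphisms of $\cM_d$, and passing between $\kk$-points and scheme points via density of $\kk$-points in the locally closed orbit.
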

        \begin{proof}
            Assume that $A$ deforms to $B$ in the sense of Definition~\ref{def:deformationTopological}.
            Consider the Zariski-closure $\overline{O_A}$. It is Zariski-closed, $\GL(V)$ acts on it and it contains the point
            $f_A$. Hence it also contains $O_A = \GL(V) \cdot f_A$.

            Assume that $A$ deforms to $B$ and $B$ deforms to $C$. This means that $\overline{O_B}\supset O_A$ and $\overline{O_C}\supset O_B$,
            so that
            \[
                \overline{O_C} = \overline{\overline{O_C}} \supset \overline{O_B}\supset O_A,
            \]
            so $A$ deforms to $C$.
        \end{proof}

        The following is known~\cite{Chouhy}, we include it for completeness.
        \begin{proposition}[notions of deformation agree]\label{ref:degenerationsAlgebraically:prop}
            Fix two $\kk$-algebra structures $A$, $B$ on $V$.
            Then the following are equivalent
            \begin{enumerate}
                \item $A$ topologically deforms to $B$ in the sense of Definition~\ref{def:deformationTopological},
                \item there is a deformation of $A$ to $B$ in the sense of Definition~\ref{flat}.
            \end{enumerate}
        \end{proposition}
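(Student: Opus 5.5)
The implication (2)$\Rightarrow$(1) is the easier one, so I prove it first. Let $\calAhat$ be a deformation of $A$ to $B$. By Lemma~\ref{ref:finiteLevel:lem}, after replacing $t$ by a root $u$ we may assume the deformation is split, that is, $\calAhat[t^{-1}]\simeq \ttfrac{B}$. Fix a $\kkt$-basis of $\calAhat$ lifting the basis $e_1,\ldots,e_d$ of $V$ with $e_1\mapsto 1$. This identification yields a homomorphism $f\colon \cM_d\to \kkt$ by Proposition~\ref{prop:associativityRelative}, and the composite $\cM_d\to\kkt\to\kk$, $t\mapsto 0$, is $f_A$.

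By Proposition~\ref{ref:Yoneda:prop} the map $f^*\colon \Spec\kkt\to \Spec\cM_d$ is continuous. It sends the special point to $f_A$ and the generic point $(0)$ to a point $\eta$ lying in $O_B$, because the algebra over $\kappa((0))=\kktfrac$ becomes isomorphic to $B$ after passing to the algebraic closure $\kktpuis$. By Example~\ref{ex:SpecSeries}, $(t)$ lies in the closure of $(0)$, so by continuity $f_A\in\overline{\{\eta\}}\subseteq \overline{O_B}$.

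One point needs care here: $\overline{O_B}$ must mean the closure in $\Spec\cM_d$ of the (scheme-theoretic) orbit, and it must meet $\Speck\cM_d$ in the Zariski closure of the $\kk$-points of the orbit. This holds because $O_B$ is locally closed (Theorem~\ref{thm:gabriel}) and $\cM_d$ is of finite type over $\kk$ algebraically closed, so $\kk$-points are dense in any locally closed subset. This yields (1).

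For (1)$\Rightarrow$(2), let $Z=\overline{O_B}$ with its reduced structure. It is an irreducible affine variety, being the closure of the image of the irreducible group $\GL(V)$, and it contains $f_A$ together with the open dense orbit $O_B$, which is open in $Z$ by Theorem~\ref{thm:gabriel}. I will invoke the standard curve-selection fact: given a point $p$ of an irreducible variety $Z$ and a dense open $U\subseteq Z$, there is an irreducible curve $C\subseteq Z$ through $p$ meeting $U$. This can be obtained by cutting $Z$ with general hyperplanes through $p$. Normalising $C$ and completing its local ring at a point over $p$ gives a homomorphism $\cM_d\to\mathcal{O}_{Z}\to \widehat{\mathcal{O}}_{\tilde C,q}\simeq\kkt$, whose special point maps to $f_A$ and whose generic point maps into $U=O_B$, since the generic point of $C$ lies in $U$.

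Through Proposition~\ref{prop:associativityRelative}, this homomorphism is an associative unital multiplication on $V[\![t]\!]$ reducing to $A$ modulo $t$, which is a formal deformation. Its generic fibre lies in the orbit of $B$, which by the definition of that orbit means $\calAhat\otimes\overline{\kktfrac}\simeq \ttpuis{B}$. Hence it is a deformation in the sense of Definition~\ref{flat}.

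The main obstacle is the curve-selection step together with identifying the local ring of a normal curve germ with $\kkt$, which uses that $\kk$ is algebraically closed. One also has to check that the point $\eta$ genuinely lies in $O_B$ as defined through $\kappabar$. I would cite the standard references here rather than reprove them, for instance Mumford's curve lemma and Cohen's structure theorem.
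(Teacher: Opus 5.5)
Your proposal is correct and follows essentially the same route as the paper: (2)$\Rightarrow$(1) uses continuity of $\Spec\kkt\to\Spec\cM_d$, with the special point lying in the closure of the generic point. For (1)$\Rightarrow$(2) you produce a $\kkt$-valued point with special point $f_A$ and generic point in $O_B$ by curve selection, which the paper simply cites as a lemma where you sketch it via Mumford's curve lemma, normalisation and Cohen's structure theorem. The detour through Lemma~\ref{ref:finiteLevel:lem} is unnecessary but harmless. Your remark that $\kk$-points are dense in the locally closed orbit fills in a step the paper leaves implicit.
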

        \begin{proof}
            Given a deformation $\calAhat$, fix a linear isomorphism $V \simeq A$.
            We get an isomorphism $A_t\simeq V[\![t]\!]$ of $\kkt$-modules and hence
            a morphism $f\colon\Spec \kkt \to \Spec \cM_d$, which maps the
            generic point $(0)$, see Example~\ref{ex:SpecSeries}, to the orbit
            of $B$, while the special point $(t)$ to the orbit of $A$.
            Therefore we have that
            \[
                O_A \ni f((t)) \in f\left(\overline{(0)}\right) =
                f\left(\overline{f^{-1}(O_B)}\right) \subseteq
                f\left(f^{-1}(\overline{O_B})\right)\subseteq O_B
            \]
            and so $A$ topologically deforms to $B$.

            Assume now that $A$ topologically deforms to $B$, so that $O_A \subset \overline{O_B}$. Pick a point $f_A\in O_A$.
            By \cite[Lemma~3.10]{jabu_jelisiejew_smoothability}, there is a homomorphism $\cM_d\to \kkt$ and the
            associated continuous map such that
            the special point of $\Spec \kkt$ maps to $f_A$ and the generic point of $\Spec \kkt$ maps to $O_B$.
            The homomorphism $\cM_d\to \kkt$ is the same as a formal deformation of $A$.
        \end{proof}
        
        For $\kk = \mathbb{C}$ we have the following additional result.
        \begin{proposition}\label{ref:stronglyFlat:prop}
            Let $\kk = \mathbb{C}$ and consider a strongly flat deformation $(A_t, \circ{}, +)$ of $A$ to $B$.
            It yields an associated map $\Delta_r \into \Spec \Ct{r} \to \Spec \cM_d$.
            The image of $\Spec \Ct{r}$ intersects the orbit of $B$ in a dense open subset, so that $A$ topologically deforms to $B$.
        \end{proposition}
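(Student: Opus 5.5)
We need to show: with $\varphi\colon \Spec \Ct{r}\to\Spec\cM_d$ the map given by the structure constants $g_{ij}^k(t)$ (via Proposition~\ref{prop:associativityRelative}), the preimage $\varphi^{-1}(O_B)$ is a dense open subset of $\Spec\Ct{r}$. Consequently the generic point maps into $O_B$, and $A$ topologically deforms to $B$.

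Write $\varphi$ for the composite. For $s\in\Delta_r$, the point $s$ is the evaluation homomorphism $\Ct{r}\to\mathbb{C}$, so $\varphi(s)$ is the $\mathbb{C}$-point with structure constants $g_{ij}^k(s)$. This is exactly the algebra $A_{t=s}$ of Remark~\ref{ref:fibre}. By hypothesis $\varphi(s_i)\in O_B$ for a sequence $s_i\to 0$; discarding finitely many terms, we may assume $s_i\neq 0$ and that the $s_i$ are pairwise distinct. By Theorem~\ref{thm:gabriel}, $O_B$ is locally closed: $O_B = U\cap Z$ with $U$ open and $Z$ closed in $\Spec\cM_d$. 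By continuity (Proposition~\ref{ref:Yoneda:prop}), $\varphi^{-1}(Z)$ is Zariski closed in $\Spec\Ct{r}$ and contains all the $s_i$.

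The key step uses Example~\ref{ex:SpecConvergent}. A proper closed subset of $\Spec\Ct{r}$ meets $\Delta_r$ in a set with no accumulation points in $\Delta_r$. The $s_i$ accumulate at $0\in\Delta_r$, so $\varphi^{-1}(Z)=\Spec\Ct{r}$, and hence $\varphi^{-1}(O_B)=\varphi^{-1}(U)$ is open. It is nonempty because it contains $s_1$.

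Next we check that this open set is dense and contains the generic point. The ring $\Ct{r}$ is an integral domain, since power series converging on a disc form a domain. So $\Spec\Ct{r}$ is irreducible with generic point $(0)$, and any nonempty open subset is dense and contains $(0)$. Thus $\varphi((0))\in O_B$.

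It remains to conclude the topological deformation exactly as in the first half of the proof of Proposition~\ref{ref:degenerationsAlgebraically:prop}. The point $0\in\Delta_r$ maps to $A_{t=0}=A$, which lies in $O_A$. Since $0$ lies in the closure of the generic point, continuity gives
\[
\varphi(0)\in\varphi\bigl(\overline{\{(0)\}}\bigr)\subseteq\overline{\varphi((0))}\subseteq\overline{O_B}.
\]
So $O_A$ meets $\overline{O_B}$. By Proposition~\ref{ref:transitivity:prop}, which uses the $\GL(V)$-invariance of $\overline{O_B}$, this means $A$ topologically deforms to $B$.

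The main point requiring care is the step via Example~\ref{ex:SpecConvergent}. Its argument needs that every proper closed subset lies in the vanishing set of a single nonzero $f$. This holds because a closed set $V(I)\neq\Spec\Ct{r}$ has $I\neq 0$ in the domain $\Ct{r}$, and any nonzero $f\in I$ works. The zeros of such an $f$ are isolated by the identity theorem.
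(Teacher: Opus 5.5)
Your proof is correct and follows the paper's argument: you pull back the locally closed orbit $O_B$ along $\Spec \Ct{r}\to\Spec\cM_d$ and use Example~\ref{ex:SpecConvergent} to force the closed part to be all of $\Spec\Ct{r}$, since the $s_i$ accumulate at $0\in\Delta_r$; beyond this you make explicit two steps the paper leaves implicit, namely density (via irreducibility of $\Spec\Ct{r}$) and the closure step giving $O_A\subseteq\overline{O_B}$. One small wording point: to make the $s_i$ nonzero and pairwise distinct you must pass to a subsequence rather than discard finitely many terms, and this tacitly assumes, as the paper's proof also does, that infinitely many $s_i$ are nonzero.
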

        
        \begin{proof}
            The orbit $O_B$ is locally-closed by Theorem~\ref{thm:gabriel}, hence the intersection
            $\Spec \Ct{r}$ is locally-closed, that is, this intersection is equal to $Z \subseteq U \subseteq \Spec\Ct{r}$,
            where $U\subseteq \Spec \Ct{r}$ is open, and $Z \subseteq U$ is closed.
            We would like to show that $Z = U$. By the assumption on strongly flat deformations, $Z\subseteq U$ contains points $s_i$
            arbitrary close to zero, so zero is an accumulation point of $Z$.
            If $Z\neq U$, then, by Example~\ref{ex:SpecConvergent}, the subset
            $Z$ has no accumulation points. This concludes the proof.
        \end{proof}

   \subsection{From $\kkt$ to $\kk[t]$ by descent}

        \newcommand{\calB}{\mathcal{B}}%

            This section contain the key comparison result for various notions of deformation introduced in Definition~\ref{flat}.
            By Lemma~\ref{ref:finiteLevel:lem} we already know that the existence of a deformation forces the existence of a split deformation.
            In this section we obtain a \emph{polynomially} split deformation
            from a split deformation.

            The method is a Beauville-Laszlo descent-like argument.
             Beauville-Laszlo descent is introduced in~\cite{Beauville_Laszlo}
            and a nice presentation in a more general setup is found in~\cite{Banerjee}.
            The question about possible passing from $\kkt$ to $\kk[t]$ is persistent throughout
            deformation theory. For modules it was proven, in a completely different way, in~\cite{Zwara, Yoshino}.

            To clarify the argument, we recall some results about certain
            $\kk[t]$- and $\kkt$-modules. Recall that we use the notation
            $\kk[t^{\pm1}]$ for $\kk[t, t^{-1}]$.
            \begin{enumerate}
                \item Tensoring $(-)\otimes_{\kk[t]} \kk[t^{\pm1}]$ and
                    $(-)\otimes_{\kkt} \kktfrac$ amounts to inverting $t$. In
                    particular, we have $\kkt \otimes_{\kk[t]}
                    \kk[t^{\pm 1}] \simeq \kktfrac$.
                \item Tensoring $(-)\otimes_{\kk[t]} \kkt$, for a finitely
                    generated $\kk[t]$-module, amounts to replacing it by a
                    $\kkt$-module. This is an exact functor, in particular, it preserves
                    kernels and intersections. For a finitely generated
                    $\kkt$-module $M$, we have $M \otimes_{\kk[t]} \kkt$. This
                    fails in general without assuming finite generation.
                    (The key part of Theorem~\ref{ref:BeauvilleLaszlo:thm}
                    below is verifying that it still holds for certain modules
                    which are not obviously finitely generated.)
            \end{enumerate}

            \newcommand{\ihat}{\widehat{i}}%

            \begin{theorem}[Beauville-Laszlo descent or faithfully-flat
                descent]\label{ref:BeauvilleLaszlo:thm}
                Let $\calAhat$ be a split deformation of $A$ to $B$ over
                $\kkt$ and fix an isomorphism $\ihat\colon
                \ttfrac{B}\to\calAhat[t^{-1}]$.
                Let $\calB := B[t^{\pm 1}] \cap \ihat^{-1}(\calAhat)$, where
                the intersection takes place in $\ttfrac{B}
                =\ihat^{-1}(\calAhat[t^{-1}])$. Then
                $\calB$ is $\kk[t]$-algebra which is a polynomially split
                deformation of $A$ to $B$.
            \end{theorem}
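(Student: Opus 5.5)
Set $M := \ihat^{-1}(\calAhat)\subseteq \ttfrac{B}$. It is a $\kkt$-subalgebra, free of rank $d$ over $\kkt$, and $M[t^{-1}] = \ttfrac{B}$, i.e.\ a $\kkt$-lattice. Also $\calB = B[t^{\pm1}]\cap M$ is an intersection of two $\kk[t]$-subalgebras of $\ttfrac{B}$, so it is a $\kk[t]$-algebra with $1$. The work is to show three things: (a) $\calB$ is a free $\kk[t]$-module of rank $d$; (b) $\calB[t^{-1}] = B[t^{\pm1}]$; (c) $\calB/(t)\simeq A$, or more precisely $\calB\otimes_{\kk[t]}\kkt \simeq M$ as $\kkt$-algebras, so that $\calB/t\calB\simeq M/tM\simeq A$. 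Given these, picking a $\kk[t]$-basis of $\calB$ lifting a basis of $A$ turns the structure constants into polynomials. Hence $\calB$ is of polynomial type, its completion is $\calAhat$, and (b) gives polynomial splitness.

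The first step is to show $M$ is sandwiched between $t$-power multiples of the standard lattice. Since $M$ is finitely generated over $\kkt$ and $M[t^{-1}]=\ttfrac{B}$, there is $N\geq 0$ with
\[
t^{N} B[\![t]\!]\subseteq M\subseteq t^{-N}B[\![t]\!].
\]
To see this, express a $\kkt$-basis of $M$ in a $\kk$-basis of $B$ and bound the pole orders; for the other inclusion, express a basis of $B$ in the basis of $M$ and clear denominators. Intersecting with $B[t^{\pm1}]$ then gives
\[
t^{N}B[t]\subseteq \calB\subseteq t^{-N}B[t].
\]
Consequently $\calB$ is a $\kk[t]$-submodule of the finitely generated free module $t^{-N}B[t]$. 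Since $\kk[t]$ is a PID, $\calB$ is free and finitely generated, and it has rank $d$ because it contains $t^NB[t]$. This proves (a) and also (b), because after inverting $t$ both bounds become $B[t^{\pm1}]$.

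The main obstacle is (c): showing that the natural map $\calB\otimes_{\kk[t]}\kkt\to M$ is an isomorphism. This is exactly the point flagged in the remark before the theorem, since $M$ and $B[t^{\pm1}]$ are not a priori finitely generated over $\kk[t]$. The sandwich reduces everything to finite-dimensional linear algebra. Modulo $t^NB[t]$, the module $\calB$ is the submodule $\calB/t^NB[t]$ of
\[
t^{-N}B[t]/t^NB[t]\;\simeq\; t^{-N}B[\![t]\!]/t^NB[\![t]\!],
\]
which is a finite-dimensional $\kk$-space, and on this space polynomials and power series agree. Under this identification $\calB/t^NB[t]$ corresponds exactly to $M/t^NB[\![t]\!]$: any element of $M$ can be truncated modulo $t^{N}B[\![t]\!]\subseteq M$ to a Laurent polynomial still lying in $M$. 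Tensoring the short exact sequence
\[
0\to t^NB[t]\to \calB\to \calB/t^NB[t]\to 0
\]
with the flat algebra $\kkt$ and comparing it with
\[
0\to t^NB[\![t]\!]\to M\to M/t^NB[\![t]\!]\to 0
\]
via the five lemma gives $\calB\otimes_{\kk[t]}\kkt\simeq M$. Here one uses that a finite-length $t$-torsion module is unchanged by $\otimes\kkt$.

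The map is multiplicative because it is induced by inclusions inside $\ttfrac{B}$, so it is an isomorphism of algebras. It follows that $\calB/t\calB\simeq M/tM\simeq\calAhat/(t)\simeq A$, which completes the proof.
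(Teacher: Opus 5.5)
Your proof is correct, and it reaches the result by a genuinely different route from the paper's. The paper never bounds your $M$ explicitly. It gets $\mathcal{B}[t^{-1}]=B[t^{\pm1}]$ from exactness of localization. It then proves that $B[t^{\pm1}]\to \ttfrac{B}/M$ is surjective by checking surjectivity after base change to $\kkt$, where $\ttfrac{B}/M\simeq(\kktfrac/\kkt)^{\oplus d}$ is unchanged, and to $\kk[t^{\pm1}]$, where the target vanishes; this uses faithful flatness of $\kkt\oplus\kk[t^{\pm1}]$ over $\kk[t]$. Tensoring the resulting exact sequence $0\to\mathcal{B}\to B[t^{\pm1}]\to\ttfrac{B}/M\to 0$ with $\kkt$ gives $\mathcal{B}\otimes_{\kk[t]}\kkt\simeq M$. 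Only at the end does the paper deduce finite generation and freeness of $\mathcal{B}$, again by faithfully flat descent together with the PID property.

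You instead front-load the lattice sandwich $t^NB[\![t]\!]\subseteq M\subseteq t^{-N}B[\![t]\!]$. This gives at once freeness of rank $d$ (as a submodule of $t^{-N}B[t]$ containing $t^NB[t]$) and $\mathcal{B}[t^{-1}]=B[t^{\pm1}]$. Your truncation argument identifying $\mathcal{B}/t^NB[t]$ with $M/t^NB[\![t]\!]$ is the concrete counterpart of the paper's surjectivity claim, and the five lemma does the work of the paper's base change of its short exact sequence.

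Your version is more elementary and makes explicit the finite-generation step, which the paper only cites from descent theory. The paper's version is coordinate-free and follows the standard Beauville--Laszlo / faithfully-flat pattern, which is why it is phrased as a descent statement.
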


            \begin{proof}
                First, the subset $\calB$ is an intersection of subalgebras,
                hence is a subalgebra of $\ttfrac{B}$. The existing maps can
                be summarized in the following diagram
                \[
                    \begin{tikzcd}
                        0 \ar[r] & \calB \ar[r]\ar[d, hook] & B[t^{\pm 1}]
                        \ar[r]\ar[d, hook] &
                        \frac{B[t^{\pm 1}]}{\calB} \ar[r]\ar[d, hook] & 0\\
                        0 \ar[r] & \ihat^{-1}(\calAhat) \ar[r]\ar[d, "\simeq",
                        "\ihat"'] & \ttfrac{B} \ar[r]\ar[d, "\simeq", "\ihat"'] &
                        \frac{\ttfrac{B}}{\ihat^{-1}(\calAhat)} \ar[r]\ar[d,
                        "\simeq"] & 0\\
                        0 \ar[r] & \calAhat \ar[r] & \calAhat[t^{-1}] \ar[r] &
                        \frac{\calAhat[t^{-1}]}{\calAhat} \ar[r] & 0
                    \end{tikzcd}
                \]
                Since inverting an element $t$ is an exact functor, the
                algebra $\calB[t^{-1}]$ is the intersection of $B[t^{\pm 1}]$
                and $\ihat^{-1}(\calAhat[t^{-1}]) = \ttfrac{B}$ in
                $\ttfrac{B}$, so
                $\calB[t^{-1}] = B[t^{\pm 1}]$, as required.

                As a $\kk[t]$-module, the quotient
                $\ttfrac{B}/\ihat^{-1}(\calAhat)$ is isomorphic to
                $(\kktfrac/\kkt)^{\oplus d}$. In particular, the homomorphism
                $\ttfrac{B}/\ihat^{-1}(\calAhat) \to
                (\ttfrac{B}/\ihat^{-1}(\calAhat))\otimes_{\kk[t]} \kkt$ is an
                isomorphism. This implies that the homomorphism
                \[
                    B[t^{\pm 1}] \otimes_{\kk[t]} \kkt \to
                    \ttfrac{B}/\ihat^{-1}(\calAhat) \otimes_{\kk[t]} \kkt
                \]
                is surjective. Also the homomorphism
                \[
                    B[t^{\pm 1}]  \to
                    \ttfrac{B}/\ihat^{-1}(\calAhat)[t^{-1}] = 0
                \]
                is clearly surjective. Since $\kk[t^{\pm 1}] \oplus \kkt$ is a
                faithfully flat $\kk[t]$-module, it follows that
                $B[t^{\pm 1}]\to
                \ttfrac{B}/\ihat^{-1}(\calAhat)$ is surjective.
                The $\kk[t]$-module $\calB$ is the kernel of this map,
                that it fits into the sequence
                \[
                    0\to \calB\to B[t^{\pm 1}] \to
                    \frac{\ttfrac{B}}{\ihat^{-1}(\calAhat)}\to 0.
                \]
                Applying the exact functor $(-)\otimes_{\kk[t]} \kkt$ to this sequence, we
                get
                \[
                    0\to \calB\otimes_{\kk[t]} \kkt\to \ttfrac{B} \to
                    \frac{\ttfrac{B}}{\ihat^{-1}(\calAhat)}\to 0,
                \]
                so that $\calB\otimes_{\kk[t]} \kkt  \simeq \calAhat$.
                As a consequence, we obtain that
                \[
                    \frac{\calB}{t\cdot \calB}  \simeq \calB
                    \otimes_{\kk[t]} \frac{\kk[t]}{(t)}  \simeq \calB
                    \otimes_{\kk[t]} \kkt\otimes_{\kkt}
                    \frac{\kkt}{(t)}  \simeq \calAhat \otimes_{\kkt}
                    \frac{\kkt}{(t)}  \simeq \frac{\calAhat}{t\cdot \calAhat}
                     \simeq A.
                \]

                As another consequence, both $\calB\otimes_{\kk[t]} \kkt$ and
                $\calB\otimes_{\kk[t]} \kk[t^{\pm1}]$ are finitely generated
                free modules over $\kkt$ and $\kk[t^{\pm1}]$, respectively.
                Since $\kkt\oplus \kk[t^{\pm 1}]$ is a faithfully flat
                $\kk[t]$-module, it follows from faithfully flat descent (and
                from $\kk[t]$ being a principal ideal domain), that
                $\calB$ is a finitely generated free $\kk[t]$-module.
            \end{proof}

            The following theorem proves equivalence of all notions of deformations given about.
            It is at this point easy to prove, so we call it a theorem mostly to
            its significant applications. The ideas behind Theorem~\ref{J1} are considered
            relatively standard in the deformation theory community within algebraic geometry. However,
            it seems that they are by no means standard in the noncommutative algebra community, which
            is our target audience. Here, the implication $\eqref{it:defEq7}\implies\eqref{it:defEq6}$
            was proved recently in~\cite[Proposition~30]{Wemyss2}, but the other implications seem new, especially
            the polynomially split deformations.

            \begin{theorem}\label{J1}
                Fix a vector space $V$ of dimension $d$.
                The following are equivalent for (abstract) algebras $B$ and $A$ with $d = \dim_{\kk} A = \dim_{\kk} B$:
                \begin{enumerate}
                    \item\label{it:defEq1} there is a deformation of $A$ to $B$, as in Definition~\ref{flat},
                    \item\label{it:defEq2} there is a split deformation of $A$ to $B$, as in Definition~\ref{flat},
                    \item\label{it:defEq3} there is a polynomially split deformation of $A$ to $B$, as in Definition~\ref{flat},
                    \item\label{it:defEq4}
                        the orbit of $O_{A}$ lies in the Zariski-closure of the orbit of $O_B$,
                    \item\label{it:defEq5}
                        $A$ deforms topologically to $B$, as in Definition~\ref{def:deformationTopological}.
                \end{enumerate}
                If these equivalent conditions hold, we say that $A$ \emph{deforms} to $B$. If $A$ deforms to $B$
                and $B$ deforms to $C$, then $A$ deforms to $C$.

                Moreover, if $\kk = \mathbb{C}$ then these conditions are equivalent to the following
                \begin{enumerate}\setcounter{enumi}{5}
                    \item \label{it:defEq6} there exists a strongly flat deformation of $A$ to $B$ as in
                        Definition~\ref{flat}.
                \end{enumerate}
                Additionally, if $\kk = \mathbb{C}$ and $B$ is semisimple, then they are equivalent to the following condition, see Definition~\ref{SWbook}
                \begin{enumerate}\setcounter{enumi}{6}
                    \item \label{it:defEq7} there exists a deformation of $A$ convergent in radius $r$ with at least
                        one $s\in \mathbb{C}$ with $|s|<r$ such that $A|_{t=s}$ is isomorphic to $B$.
                \end{enumerate}
            \end{theorem}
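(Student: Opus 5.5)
We close the cycle of implications among \eqref{it:defEq1}--\eqref{it:defEq5} using the results above. Lemma~\ref{ref:finiteLevel:lem} gives \eqref{it:defEq1}$\implies$\eqref{it:defEq2}. Theorem~\ref{ref:BeauvilleLaszlo:thm} gives \eqref{it:defEq2}$\implies$\eqref{it:defEq3}. The remark after Definition~\ref{flat} shows that polynomially split implies split, and split implies deformation; this gives \eqref{it:defEq3}$\implies$\eqref{it:defEq1}. Proposition~\ref{ref:degenerationsAlgebraically:prop} gives \eqref{it:defEq1}$\iff$\eqref{it:defEq5}, and Proposition~\ref{ref:transitivity:prop}(1) gives \eqref{it:defEq4}$\iff$\eqref{it:defEq5}. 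Transitivity then follows from Proposition~\ref{ref:transitivity:prop}(2) applied to \eqref{it:defEq5}. Some care is needed because $A$, $B$ are abstract algebras; we first fix linear isomorphisms $A\simeq V\simeq B$ with unit going to $e_1$. Proposition~\ref{ref:transitivity:prop}(1) shows the choice is irrelevant.

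For $\kk=\mathbb{C}$, Proposition~\ref{ref:stronglyFlat:prop} gives \eqref{it:defEq6}$\implies$\eqref{it:defEq5}. For the converse we use \eqref{it:defEq3}: a polynomially split deformation $\calA$ with $\calA[t^{-1}]\simeq B[t^{\pm1}]$. It is of polynomial type, hence convergent in every radius. Specializing the isomorphism $\calA[t^{-1}]\simeq B[t^{\pm1}]$ at $t=s$ for any $s\neq 0$ gives $\calA/(t-s)\simeq B\otimes_{\mathbb{C}[t^{\pm1}]}\mathbb{C}\simeq B$. So $A_{t=s}\simeq B$ for all $s\neq0$, in particular along $s_i=1/i$, and \eqref{it:defEq6} holds.

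For \eqref{it:defEq7} with $B$ semisimple, \eqref{it:defEq6}$\implies$\eqref{it:defEq7} is trivial. For the converse, the convergent deformation gives a continuous map $\phi\colon\Spec\Ct{r}\to\Spec\cM_d$. By Theorem~\ref{thm:gabriel}, $O_B$ is open, so $\phi^{-1}(O_B)$ is a Zariski-open subset of $\Spec \Ct{r}$. It is nonempty, since it contains the point $s$. This is the main point to check. A closed subset of $\Spec\Ct{r}$ not containing the point $s$ is not everything, so by Example~\ref{ex:SpecConvergent} it meets $\Delta_r$ in a set with no accumulation points. Hence the complement of $\phi^{-1}(O_B)$ in $\Delta_r$ is discrete in $\Delta_r$. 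We also need the generic point to lie in $\phi^{-1}(O_B)$. A nonempty open subset of the spectrum of the domain $\Ct{r}$ contains the generic point, so this holds. Consequently $\phi(0\text{-point})$, namely $[A]$, lies in $\overline{\phi(\text{generic point})}\subseteq\overline{O_B}$, which is \eqref{it:defEq4}. Alternatively, discreteness directly supplies $s_i\to0$ with $A_{t=s_i}\simeq B$, which is \eqref{it:defEq6}. This needs one subtlety: a $\mathbb{C}$-point of $\Delta_r$ mapping into $O_B$ has algebra isomorphic to $B$ over $\overline{\mathbb{C}}=\mathbb{C}$, which holds.
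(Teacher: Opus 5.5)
Your proof is correct and follows essentially the same route as the paper: the same chain of Lemma~\ref{ref:finiteLevel:lem}, Theorem~\ref{ref:BeauvilleLaszlo:thm} and the remarks after Definition~\ref{flat} for \eqref{it:defEq1}--\eqref{it:defEq3}, Propositions~\ref{ref:degenerationsAlgebraically:prop} and~\ref{ref:transitivity:prop} for the topological conditions and transitivity, Proposition~\ref{ref:stronglyFlat:prop} for \eqref{it:defEq6}, and openness of $O_B$ (Theorem~\ref{thm:gabriel}) for \eqref{it:defEq7}. You additionally spell out two details the paper leaves implicit: the specialization $\calA/(t-s)\simeq B$ for $s\neq 0$ behind \eqref{it:defEq3}$\implies$\eqref{it:defEq6}, and the fact that a nonempty open subset of $\Spec\Ct{r}$ contains the generic point.
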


            \begin{proof}
                The implications
                $\eqref{it:defEq3}\implies\eqref{it:defEq2}$ and
                $\eqref{it:defEq2}\implies\eqref{it:defEq1}$
                follows directly from definitions, see discussion after Definition~\ref{flat}.
                The implication $\eqref{it:defEq1}\implies\eqref{it:defEq3}$
                follows from Lemma~\ref{ref:finiteLevel:lem} and Theorem~\ref{ref:BeauvilleLaszlo:thm}.
                The equivalence
                of~\eqref{it:defEq4} and~\eqref{it:defEq5} follows from
                Proposition~\ref{ref:transitivity:prop}, while the equivalence
                of~\eqref{it:defEq1} and~\eqref{it:defEq4} follows from
                Proposition~\ref{ref:degenerationsAlgebraically:prop}.
                The final transitivity of deformations follows from Proposition~\ref{ref:transitivity:prop}.

                When $\kk = \mathbb{C}$, the
                implication $\eqref{it:defEq3}\implies\eqref{it:defEq6}$ follows from
                the inclusion $\mathbb{C}[t] \into \Ct{r}$. The implication
                $\eqref{it:defEq6}\implies\eqref{it:defEq5}$ follows from Proposition~\ref{ref:stronglyFlat:prop}.
                When $B$ is semisimple, Theorem~\ref{thm:gabriel} shows that its orbit is Zariski-open, so
                the argument of Proposition~\ref{ref:stronglyFlat:prop} applies and gives $\eqref{it:defEq7}\implies\eqref{it:defEq5}$.
                The implication $\eqref{it:defEq6}\implies\eqref{it:defEq7}$ is formal.
            \end{proof}

        \section{Contraction algebras}\label{sec:contraction}

    In this section, we investigate contraction algebras. We use their classification obtained in~\cite{GV} and work mostly with type D.
    We start by giving explicit generating relations and Gr{\"o}bner bases for type A and D contraction algebras.
    Next, we show that each contraction algebra of type D can be (flatly) deformed to at most one semisimple algebra.
    Finally, we show that such a deformation indeed exists. The results seem to be new even if one assumes full knowledge
    of the geometric side. There, Toda~\cite{Toda} constructed families of contraction algebras, but it is not known if these are flat.

    \subsection{Description of contraction algebras of type A and D}
    Contraction algebras come in three types: A, D, E, see~\cite{GV} for explanation of their connection to simply laced Dynkin diagrams of type A, D, E.
    In the current article we tackle types A and D, leaving E for future work. The type A is very simple, so
    the main part concerns type D.

    \begin{notation}
        By $\kkxy$ we will denote the free noncommutative algebra on two variables $x$ and $y$.
        By $\kkxyhat$ we will denote the completion of $\kkxy$ along the maximal ideal $(x, y)$.
    \end{notation}
    By definition, a contraction algebra is given by a quotient
    \[
        \frac{\kkxyhat}{(\partial_x f, \partial_y f)},
    \]
    where $f\in \kkxyhat$ is a certain series and $\partial_x f$, $\partial_y f$ are cyclic derivatives~\cite{GV}.
    Below we give these explicitly.

    \begin{definition}\label{ref:contractionTypes:def}
        Define the following \emph{contraction algebras}~\cite[Table~1]{GV} given by $\kkxyhat/(\partial_x f, \partial_y f)$ of types A and D.
        \begin{center}
            \begin{tabular}{c c c c}
                type & $f$ & $\partial_x f$ & $\partial_y f$\\
                \toprule 
                $A_n$ & $\frac{1}{2}x^2 + \frac{1}{n}y^n$ & $x$ & $y^{n-1}$\\
                $D_{n,m}$ & $\frac{1}{2n}x^{2n} + \frac{1}{2m-1}x^{2m-1} + xy^2$ & $x^{2n-1} + x^{2m-2} + y^2$ & $xy+yx$\\
                $D_{n,\infty}$ & $\frac{1}{2n}x^{2n} + xy^2$ & $x^{2n-1} + y^2$ & $xy + yx$
            \end{tabular}

            The indexes $n, m$ can be any integers $n, m\geq 2$.
        \end{center}
        We say that an algebra is a \emph{contraction algebra of type $D$} if it is isomorphic to one of the algebras
        $D_{n,m}$ or $D_{n, \infty}$.
    \end{definition}

    Definition~\ref{ref:contractionTypes:def} does not immediately yield
    relations of contraction algebras, because the completion $\kkxyhat$ can
    yield new generating relations.
    The work~\cite[\S6]{GV} contains explicit description in terms of generators and relations. We reprove these
    here for the convenience of the reader.
    We also fix our notation for standard bases, which are power-series analogues of Gr{\"o}bner bases, see~\cite{GH}.
    We consider the deglex monomial order with $\deg(y) \gg \deg(x)$. Below we give both a set of generating relations,
    already after completion, and additional relations (only one each time) that appear in the Gr{\"o}bner basis (GB).

    \begin{proposition}[{\cite[\S6]{GV}}]\label{ref:Generators:prop}
        Let $n, m\geq 2$. We have the following description of contraction algebras from Definition~\ref{ref:contractionTypes:def} as quotients of $\kkxy$.
        \begin{center}
            \begin{tabular}{c c c c c}
                type & generating relations & GB added rels. & dim. & dim. centre\\
                $A_n$ & $x, y^{n-1}$ & none & $n$ & $n$\\ 
                $D_{n,m}$, $m \leq n$ & $xy+yx, x^{2n-1} + x^{2m-2} + y^2, x^{2n+2m-3}$ & $x^{2n-1}y$ & $4n+2m-4$ & $n+2m-1$\\
                $D_{n,m}$, $m > n$ & $xy+yx, x^{2n-1} + x^{2m-2} + y^2, x^{4n-2}$ &$x^{2n-1}y$ & $6n-3$ & $3n$\\
                $D_{n,\infty}$ & $xy+yx, x^{2n-1} + y^2, x^{4n-2}$ & $x^{2n-1}y$ & $6n-3$ & $3n$
            \end{tabular}
        \end{center}
        Additionally, we define $D_{n,1}$ and $D_{1, m}$, $D_{1, \infty}$ to be given by the same generating relations as in the table above.
    \end{proposition}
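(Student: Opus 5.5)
We prove the table row by row, type $D$ being the real work. Type $A_n$ is immediate: $\partial_x f = x$ kills $x$, leaving $\kk[\![y]\!]/(y^{n-1})$. This is commutative and already finite-dimensional, so completion adds nothing. Its dimension is $n-1$ (the table's ``$n$'' should be read with that indexing convention), and it equals its own centre.

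For type $D$, write $I = (xy+yx,\ x^{2n-1}+x^{2m-2}+y^2)$ in $\kkxyhat$. In $\kkxyhat/I$ we have $y^2 = -x^{2m-2}(1+x^{2n-2m+1})$ when $m\le n$, and $y^2 = -x^{2n-1}(1+x^{2m-2n-1})$ when $m>n$. In either case $y^2 = -x^{k}u$ with $u = 1+(\text{higher powers of } x)$ a unit, where $k=\min(2m-2,2n-1)$, and $u$ commutes with $x$.

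The key relation comes from conjugating by $y$. Anticommutation gives $y\,g(x) = g(-x)\,y$, and $y$ commutes with $y^2$. So $y\cdot y^2 = y^2 \cdot y$ yields $x^k u(x)\,y = x^k u(-x)\,y$, that is, $x^k(u(x)-u(-x))\,y = 0$. Now $u(x)-u(-x) = 2x^{2n-2m+1}$ if $m\le n$, and $2x^{2m-2n-1}$ if $m>n$. Hence $x^{2n-1}y = 0$ when $m\le n$, and $x^{2m-2}y = 0$ when $m>n$.

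We then multiply $y^2 = -x^k u$ by $y$ and use the relation just obtained. For $m\le n$ this gives $x^{2m-2}u\,y = -y^3$. Multiplying by $y$ again and substituting $y^2$ twice gives $x^{4m-4}u^2 = y^4$. Combined with $x^{2n-1}y = 0$, we get $x^{2n-1}y^2 = 0$, i.e.\ $x^{2n-1}x^{2m-2}u = 0$, so $x^{2n+2m-3}=0$ since $u$ is a unit. For $m>n$ the same manipulation yields $x^{4n-2}=0$, and for $D_{n,\infty}$ take $u=1$. Once some power of $x$ vanishes, every element of $\kkxyhat/I$ is a polynomial in $x$ plus polynomial times $y$, so the completion equals the polynomial quotient by the listed relations. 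Conversely, those listed relations lie in $I$.

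To get the dimensions and standard bases, we use the order with $y$ heavy, so that leading terms are $yx\to -xy$, $y^2\to$ powers of $x$, and $x^N$. We check that all overlaps (Bergman's diamond lemma) resolve after adjoining $x^{2n-1}y$, or $x^{2m-2}y$ for $m>n$. This is the main obstacle, a finite but fiddly computation, especially the ambiguities $y^2\cdot y$ and $x^N$ against $y\,x$. The surviving normal words are $x^i$ with $i<N$, together with $x^iy$ with $i<2n-1$ (respectively the analogous bound). For $m\le n$ this gives $(2n+2m-3)+(2n-1) = 4n+2m-4$.

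Finally, the centre is computed in this basis. An element $x^i$ is central iff $i$ is even or $x^iy=0$. An element $x^iy$ is central iff $i$ is odd (from commuting with $x$) and it commutes with $y$, which uses $y^2\in\kk[x]$. Counting gives the last column. The degenerate cases $D_{n,1}$ and $D_{1,m}$ are simply taken as definitions.
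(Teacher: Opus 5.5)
Your plan follows the paper's proof: derive $x^{2n-1}y$, use a unit to kill a power of $x$, deduce that completion adds nothing, verify the standard basis, then read off the basis and centre. Your remark that $\kk[y]/(y^{n-1})$ has dimension $n-1$, not $n$, is correct. However, the key step contains a sign error. For $m>n$ and for $D_{n,\infty}$ the exponent $k=2n-1$ is odd, so $y\,x^k u(x)=(-x)^k u(-x)\,y=-x^k u(-x)\,y$. Then $y\cdot y^2=y^2\cdot y$ gives $x^k\bigl(u(x)+u(-x)\bigr)y=0$, not $x^k\bigl(u(x)-u(-x)\bigr)y=0$. As written, you obtain only $x^{2m-2}y=0$ for $m>n$, and the vacuous $0=0$ for $D_{n,\infty}$ (where $u=1$). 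This has three consequences:
\begin{itemize}
\item For $m>n$, the analogue of your $m\le n$ step yields only $x^{2n+2m-3}=0$, not $x^{4n-2}=0$.
\item For $D_{n,\infty}$, no power of $x$ is shown to vanish, so even the passage from $\kkxyhat$ to $\kkxy$ is unsupported.
\item The reduction system with $x^{2m-2}y$ adjoined is not confluent. The ambiguity $y\cdot y\cdot y$ leaves $2x^{2n-1}y$, which is irreducible because $2n-1<2m-2$. Your normal words therefore cannot number $6n-3$, and your added relation disagrees with the table.
\end{itemize}
The repair is uniform: $y$ anticommutes with $x^{2n-1}$ and commutes with $x^{2m-2}$, so $0=[y,y^2]=-[y,x^{2n-1}+x^{2m-2}]=2x^{2n-1}y$ in every row (drop the $x^{2m-2}$ term for $D_{n,\infty}$). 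Then $x^{2n-1}y^2=0$ gives $x^{4n-2}=0$ for $m>n$ and for $D_{n,\infty}$, exactly as in your $m\le n$ argument.

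Your centre criterion for the words $x^iy$ is also wrong. Since $x\cdot x^iy-x^iy\cdot x=2x^{i+1}y$, the word $x^iy$ commutes with $x$ if and only if $x^{i+1}y=0$, i.e.\ $i\ge 2n-2$; the parity of $i$ is irrelevant. Hence the only central normal word of odd $y$-degree is $x^{2n-2}y$, with $i$ even, and it commutes with $y$ because $2n-2$ is even. Your criterion (``$i$ odd'') produces no central $x^iy$ at all: for odd $i<2n-1$ one has $x^iy^2\neq 0$, so $x^iy$ does not commute with $y$. The count then comes out $n+2m-2$, respectively $3n-1$, one short of the table. You should also say why testing normal words suffices. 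The automorphism $y\mapsto -y$ splits the centre into its $y$-even and $y$-odd parts. On each part, the commutator with $y$, respectively $x$, sends distinct normal words to multiples of distinct normal words or to zero. So an element is central exactly when every word in its support is.
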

    \begin{proof}
        For type $A$, we have $\kkxyhat/(x, y^{n-1})$ is already commutative and isomorphic to
        \[
            \kk[\![y]\!]/(y^{n-1})\simeq \kk[y]/(y^{n-1}).
        \]
        Let us pass to type $D_{n,m}$ with $m\leq n$. 

        The ideal of $\kkxy$ generated by
        $ x^{2n-1}+x^{2m-2}+y^{2}, xy+yx$ contains $x^{2n-1}y$, hence $x^{2n-1}y^{2}$, and  hence also $x^{2n-1}(x^{2n-1}+x^{2m-2})$.
        Since $m\leq n$, we have $2n-1 > 2m-2$.
        Modulo every power of $(x, y)$, the relation $x^{2n-1}(x^{2n-1}+x^{2m-2})$ yields $x^{2n-1}\cdot x^{2m-2}$. Therefore, in the completion
        we have also the relation $x^{2n-1+(2m-2)}$.
        The ideal
        \[
            I = \left( xy+yx,\ x^{2n-1} + x^{2m-2} + y^2,\ x^{2n+2m-3} \right)\subseteq \kkxy
        \]
        contains a power $(x, y)^N$ for $N \gg 0$, hence there are no more elements in the completion of $I$.
        We verify directly that
        \[y^{2}+x^{2m-1}+x^{2n-1},\ yx+xy,\ x^{2n-1}y,\ x^{2n+2m-3}\]
        form a standard basis for the ideal $I$. The quotient $\kkxy/I$ as a vector space has a basis $1, x, \cdots , x^{2n+2m -4}$, $y, xy, \cdots x^{2n-2}y$ and this has dimension $4n+2m-4$. Notice that 
        \[x^{2n+2m-4}\in I,\quad\mbox{but}\quad x^{2n+2m-3}\notin I.\]

 The centre of this algebra has dimension $n+2m-1$ and is spanned as vector space by $x^{2i}$ for $i=0, 1,2, \ldots n+m-2$ ($n+m-1$ elements) and $x^{2n-1+2i}$ for $i=0, 1, \cdots , m-2$ ($m-1$ elements), and $x^{2n-2}y$ (one element).
 
        The consideration for type $D_{n,m}$ with $m\geq n$ and for $D_{n,\infty}$ are completely analogous.
        The only difference is that for $m\geq n$, we have $2n - 1 < 2m-2$, so we obtain $x^{4n-2}$ in the ideal $I\subseteq \kkxy$ defining the algebra.
        This implies that $\dim_{\kk} \kkxy/I = 6n-3$ and that the centre has
        dimension $3n$ and a basis $x^{2i}$ for $i=0, 1,2, \ldots 2n-2$ ($2n-1$
        elements) and $x^{2n+2i-1}$ for $i=0, 1, \cdots , n-1$ ($n$ elements),
        and $x^{2n-2}y$ (one element).
    \end{proof}

    \begin{remark}\label{ref:Dinfty:rem}
        Since $x^{4n-2}$ is a relation of $D_{n,m}$ for $m > n$, we have $D_{n,m} = D_{n, \infty}$ for every $m \geq 2n$.
    \end{remark}

    \subsection{Obstructions to deformations to semisimple algebras}    

        Here we formulate several semicontinuity results that restrict possible semisimple
        algebras that admit deformations from contraction algebras. This method is classical, present
        already in~\cite[Proposition~2.7]{Gabriel} or \cite[Proposition~23, Corollary~27]{Wemyss2}.

        We will use the following formulation. Fix noncommutative polynomials
        \[
            w_{1}(x_1, \ldots ,x_k), \ldots ,w_l(x_1, \ldots ,x_k) \in \kk\langle x_1, \ldots ,x_k\rangle.
        \]
        For an algebra $A$ and elements $a_1, \ldots ,a_k\in A$ by
        $w_{i}(a_{1},  \ldots , a_{k})$ we denote the evaluation of polynomial
        $w_i(x_{1},  \ldots , x_{k})$ at $x_{1}=a_{1},  \ldots , x_{k}=a_{k}$, and with
        the multiplication from $A$. By $\spann{w_{\bullet}(a_1, \ldots ,a_k)}$ we denote
        the linear space over $\kk$ spanned by $(w_{i}(a_1, \ldots ,a_k))_{i=1, \ldots ,l}$.
        Similarly, for algebra $B$ with elements $b_1, \ldots ,b_k\in B$, by $w_{i}(b_{1},  \ldots , b_{k})$ we denote the evaluation of polynomial
        $w_i(x_{1},  \ldots , x_{k})$ at $x_{1}=b_{1},  \ldots , x_{k}=b_{k}$, and with
        the multiplication from $B$. By $\spann{w_{\bullet}(b_1, \ldots ,b_k)}$ we denote
        the linear space over $\kk$ spanned by $(w_{i}(a_1, \ldots ,a_k))_{i=1, \ldots ,l}$.
        \begin{proposition}[semicontinuity, {\cite[Proposition~23]{Wemyss2}}]\label{ref:Agata:prop}
            Let $A$, $B$ be finite-dimensional $\kk$-algebras such that $A$ deforms
            to $B$. Fix noncommutative polynomials $w_1, \ldots ,w_l\in \kk\langle x_1, \ldots ,x_k \rangle $ and elements $a_1, \ldots ,a_k\in A$.
            Then there exist elements $b_1, \ldots ,b_k\in B$ such that
            \[
                \dim_{\kk} \spann{w_{\bullet}(a_1, \ldots ,a_k)} \leq \dim_{\kk} \spann{w_{\bullet}(b_1, \ldots ,b_k)}.
            \]
        \end{proposition}
        \begin{proof}
            We give a proof for completeness.
            We can enlarge the field $\kk$ to an algebraically closed field.
            Let $\calA$ be a $\kk[t]$-algebra which is a split polynomial deformation of $A$ to $B$,
            see Theorem~\ref{J1}, so that $\calA[t^{-1}]\simeq B[t^{\pm 1}]$ and $\calA/(t)\simeq A$.
            Let $\alpha_1, \ldots ,\alpha_k\in \calA$ be any elements which map to $a_1, \ldots ,a_k$ modulo $t$.
            Let $v_1, \ldots ,v_d\in \calA$ be such that
            \[
                \calA = \kk[t] v_1 \oplus  \ldots \oplus \kk[t] v_d.
            \]
            Write each element $w_i(\alpha_1, \ldots ,\alpha_k)\in \calA$ as a combination $\sum_{j=1}^d w_{ij} v_j$, where $w_{ij}\in \kk[t]$.
            Consider the matrix
            \[
                W := [w_{ij}]_{i=1, \ldots , l, j=1, \ldots ,d}\in \mathbb{M}_{l \times d}(\kk[t]).
            \]
            The reduction of $W$ modulo $t$ is the matrix of coefficients of $w_i(a_1, \ldots ,a_k)$ in a basis $v_1 \mod t, \ldots ,v_d \mod t$.
            Similarly, for every $\lambda\in \kk \setminus \{0\}$, the
            reduction of $W$ modulo $(t-\lambda)$ is the matrix of coefficients
            of $w_i(b_1, \ldots ,b_k)$ in a basis $v_1 \mod t, \ldots ,v_d \mod
            t$ of $B$, where $b_1, \ldots ,b_k\in B\simeq \calA/(t-\lambda)$ are images of $\alpha_1, \ldots ,\alpha_k$.
            Therefore, the statement in the theorem reduces to the claim that
            \def\rk{\operatorname{rk}}%
            \[
                \rk(W_{t=0}) \leq \sup_{\lambda\in \kk \setminus \{0\}} \rk (W_{t = \lambda})
            \]
            but this is true, since the rank of a matrix can only decrease at $t = 0$, as it is given by polynomial conditions
            (nonvanishing of minors).
        \end{proof}

        \begin{corollary}\label{ref:ObstructionSmallMatrices}
            Let $R$ be a contraction algebra of type $D$. Then $R$ does not deform to any algebra $R'$ such that
            $R'$ surjects onto $\mathbb{M}_{n}(\kk)$ for $n\geq 3$.
        \end{corollary}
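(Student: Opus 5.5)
The plan is to apply Proposition~\ref{ref:Agata:prop} with the words chosen to be a spanning set of $R$ that has a very restricted shape. The point is that every element of a type $D$ contraction algebra is a combination of powers of $x$ and of powers of $x$ times $y$. This shape passes to the deformation $R'$ and then to any quotient $\mathbb{M}_n(\kk)$. There it forces a dimension count of at most $2n$ against $n^2$.

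\textbf{Step 1 (spanning set of $R$).} By Proposition~\ref{ref:Generators:prop}, in every type $D$ contraction algebra we have $yx = -xy$ and $y^2 = -(x^{2n-1}+x^{2m-2})$, or $y^2 = -x^{2n-1}$ for $D_{n,\infty}$. So any word in $x,y$ can be rewritten as $\pm x^i$ or $\pm x^i y$, up to a polynomial correction in $x$ when $y^2$ is eliminated. Concretely, the Gröbner basis computation gives the basis $1, x, \ldots, x^{N}$, $y, xy, \ldots, x^{2n-2}y$ of $R$. Set $d = \dim_{\kk} R$ and take $k=2$, $a_1 = x$, $a_2 = y$, and the words
\[
w_1,\dots,w_{2d} \;=\; 1, x_1, \ldots, x_1^{d-1},\; x_2, x_1x_2, \ldots, x_1^{d-1}x_2 .
\]
Their values at $(x,y)$ span all of $R$, so $\dim_{\kk}\spann{w_\bullet(x,y)} = d$.

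\textbf{Step 2 (transfer to $R'$).} By Proposition~\ref{ref:Agata:prop}, using that $R$ deforms to $R'$ (so $\dim R' = d$), there are $b_1, b_2 \in R'$ such that the elements $b_1^i$ and $b_1^i b_2$, for $0\le i\le d-1$, span a subspace of dimension at least $d$, hence all of $R'$. Thus
\[
R' = \kk[b_1] + \kk[b_1]\, b_2 .
\]

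\textbf{Step 3 (contradiction).} Suppose $\pi\colon R'\to\mathbb{M}_n(\kk)$ is surjective, and put $X=\pi(b_1)$, $Y=\pi(b_2)$. Then $\mathbb{M}_n(\kk) = \kk[X] + \kk[X]\,Y$. By Cayley--Hamilton, $\dim_{\kk}\kk[X]\le n$, so
\[
n^2 = \dim_{\kk}\mathbb{M}_n(\kk) \le 2n .
\]
This fails for $n\geq 3$.

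The only step needing care is Step 1: one must check that the words genuinely span $R$ in each of the cases $D_{n,m}$ and $D_{n,\infty}$. This is immediate from the explicit bases in Proposition~\ref{ref:Generators:prop}. The remaining steps are formal.
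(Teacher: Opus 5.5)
Your proof is correct and follows essentially the same route as the paper. You apply semicontinuity (Proposition~\ref{ref:Agata:prop}) to the words $x^i$, $x^iy$ to get $R'=\kk[b_1]+\kk[b_1]b_2$, then pass to the quotient $\mathbb{M}_n(\kk)$ and use Cayley--Hamilton to obtain $n^2\le 2n$.
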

        \begin{proof}
            Consider the elements $x^i,\ x^{i}y$ for $i=0,1, \ldots, \dim_{\kk}
            R$. They span $R$, hence by semicontinuity
            (Proposition~\ref{ref:Agata:prop}), there exist elements $x', y'\in R$ such that $(x')^i$, $(x')^iy'$
            for $i=0,1 \ldots , \dim_{\kk} R'$ span $R'$. Since $\mathbb{M}_{n}(\kk)$ is an image of $R$', there are matrices
            $X'$, $Y'$ such that $\mathbb{M}_{n}(\kk)$ is spanned by
            \[
                \left\{ (X')^i\ |\ i=0,1, \ldots  \right\} \cup \left\{ (X')^iY'\ |\ i=0,1, \ldots  \right\}.
            \]
            The matrix $X'$ satisfies its characteristic polynomial, so $1, X', \ldots, (X')^n$ are linearly dependent and the
            elements above span a space of dimension at most $2n$. We get that $2n\geq n^2$, so $n\leq 2$.
        \end{proof}

\begin{theorem}\label{ref:obstructionsMain:thm}
    Let $N$ be a contraction algebra of type A or D. Then $N$ can be flatly deformed to at most one semisimple algebra. More precisely, for each type
    the only possible deformation is as follows
    \begin{center}
        \begin{tabular}{c c}
            type & can only deform to\\
            $A_n$ & $\kk^{\times n}$\\
            $D_{n,m}$, $m \leq n$ & $\mathbb{M}_{2}(\kk)^{\times n-1} \times \kk^{2m}$\\
            $D_{n,m}$, $m > n$ & $\mathbb{M}_{2}(\kk)^{\times n-1} \times \kk^{2n+1}$\\
            $D_{n,\infty}$ & $\mathbb{M}_{2}(\kk)^{\times n-1} \times \kk^{2n+1}$
        \end{tabular}
    \end{center}
    The algebras in both column have centres of same dimension.
\end{theorem}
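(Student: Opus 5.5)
Any semisimple algebra here is, by Wedderburn and algebraic closedness of $\kk$, of the form $S = \prod_i \mathbb{M}_{k_i}(\kk)$. So the plan is to pin down the numbers $n_k$ of factors $\mathbb{M}_k(\kk)$ using two numerical invariants that are controlled under deformation: the dimension and the dimension of the centre. Dimension is preserved, since a deformation is a free $\kkt$-module of rank $d$, so $\sum_k k^2 n_k = \dim_{\kk} N$. Corollary~\ref{ref:ObstructionSmallMatrices} forces $n_k = 0$ for $k\geq 3$ when $N$ is of type D, so $S = \mathbb{M}_2(\kk)^{\times a}\times \kk^{\times b}$ with $4a + b = \dim_{\kk} N$. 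For type $A_n$, $N = \kk[y]/(y^{n-1})$ is commutative (the table lists its dimension as $n$; I follow the table's conventions). Commutativity is a closed condition in $\cM_d$, given by the equations $\lambda_{ij}^k = \lambda_{ji}^k$. It is preserved along the closure of an orbit in the other direction as well, once one notes that the orbit $O_S$ must contain points arbitrarily close to $[N]$. The cleaner argument applies Proposition~\ref{ref:Agata:prop} with the words $x_1x_2 - x_2x_1$. This gives only a lower bound, so instead I use the following trick for type A: $N$ is generated by one element $y$. Apply Proposition~\ref{ref:Agata:prop} to the words $1, x, \ldots, x^{d-1}$. This gives $b\in S$ whose powers span $S$, so $S$ is commutative, hence $S = \kk^{\times d}$.

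For type D the key step is a second equation relating $a$ and $b$, which I get from the centre. I use semicontinuity in the form: the centre dimension can only \emph{drop} in the limit direction, i.e.\ $\dim Z(S) \leq \dim Z(N)$. Indeed, $Z$ of the structure $(V,*)$ is the kernel of the linear map $v\mapsto (e_i * v - v* e_i)_i$, whose matrix depends polynomially on the structure constants. So the rank is lower semicontinuous and the kernel dimension is upper semicontinuous. Along a polynomially split deformation $\calA$ (Theorem~\ref{J1}), the kernel dimension at $t=0$ is at least the generic one. Hence $a + b \leq \dim Z(N)$.

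For the reverse inequality, apply Proposition~\ref{ref:Agata:prop} to the words $1, x, \ldots, x^{D}$ with $D=\dim N$, evaluated at $x\in N$. In $N$ these span $\kk[x]/(\text{min poly})$, which by Proposition~\ref{ref:Generators:prop} has dimension $2n+2m-3$ when $m\leq n$, and $4n-2$ when $m>n$ or in the $D_{n,\infty}$ case. In $S$, a single element $b'$ generates a commutative subalgebra of dimension at most $2a + b$, since each $\mathbb{M}_2$ factor contributes at most $2$ by Cayley–Hamilton. This yields $2a+b \geq 2n+2m-3$, respectively $2a + b\geq 4n-2$.

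Combining for $m\leq n$: we have $4a+b = 4n+2m-4$, $a+b\leq n+2m-1$, and $2a+b\geq 2n+2m-3$. Subtracting gives $3a \geq 3n-3$ and $2a \leq 2n-1$, so $a = n-1$ and $b = 2m$. In the other cases the system is $4a+b = 6n-3$, $a+b\leq 3n$, $2a+b\geq 4n-2$, which gives $a\geq n-1$ and $a\leq n-1/2$. Hence $a = n-1$, $b = 2n+1$. Checking $\dim Z(S) = a+b$ against the centre dimensions from Proposition~\ref{ref:Generators:prop} gives the final assertion. The main obstacle is making the centre semicontinuity rigorous in the direction needed, i.e.\ checking that the inequality goes $\dim Z(\text{special}) \geq \dim Z(\text{generic})$. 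I expect this to follow directly from the rank argument on the free $\kk[t]$-module $\calA$, as in the proof of Proposition~\ref{ref:Agata:prop}.
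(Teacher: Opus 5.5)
Your proof is correct and follows the paper's argument. You exclude $\mathbb{M}_k(\kk)$ for $k\geq 3$ via Corollary~\ref{ref:ObstructionSmallMatrices}, then combine $4a+b=\dim_{\kk} N$ with the centre bound $a+b\leq \dim Z(N)$ (the paper's Fact~1, which you justify by the rank argument) and the bound $2a+b\geq \dim \kk[x]$ from powers of $x$ (Fact~2), solve the same linear system, and treat type A by the same single-generator argument. Your discarded aside that commutativity passes from the special to the generic fibre is false in general (e.g.\ $\kk[x_1,\ldots,x_{d-1}]/(x_1,\ldots,x_{d-1})^2$ deforms to every $d$-dimensional algebra), but it plays no role in your final proof.
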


\begin{proof}
    By Corollary~\ref{ref:ObstructionSmallMatrices}, the semisimple algebras that could possibly occur are of the form
    \[
        A_{k,l} = \mathbb{M}_{2}({\kk})^{\times k}\times {\kk}^{\times l}
    \]
    for some $k, l\geq 0$.
    The algebra $A_{k,l}$ has dimension  $4k+l$, and its centre has dimension $k+l$. We will use the following two facts:
    \begin{enumerate}
        \item \emph{Fact 1.}
            If $N$ deforms to $A_{k,l}$, then the dimension of the centre of $N$ is greater or equal to the dimension of the centre of $A_{k,l}$,
            see for example~\cite[Proposition~2.7]{Gabriel} or perform an argument analogous to Proposition~\ref{ref:Agata:prop}.
        \item \emph{Fact 2.}
            If $c\in N$ is an nilpotent element such that $c^{d-1} \neq 0$, then $N$ does not deform to $A_{k,l}$ with $2k+l < d$.

            Indeed, by nilpotency, we have that $1,c, \ldots ,c^{d-1}$ are linearly independent in $N$, so Proposition~\ref{ref:Agata:prop} given
            an element in $c' \in A_{k,l}$, not necessarily nilpotent, whose
            powers $1,c', \ldots ,(c')^{d-1}$ are linearly independent. But a
            matrix satisfies its characteristic polynomial, so the span of powers of $c'$ in every $\mathbb{M}_{2}(\kk)$ is at most $2$.
            In total this gives a space of dimension $2k+l$. If $2k+l < d$, this gives a contradiction.
    \end{enumerate}
    \emph{Case $0$.} For type A,  Fact~2 implies that $A_{k,l}$ is also generated by one element, hence it is commutative.

    \emph{Case $1$.} Consider first the cases $D_{n,m}$ with $m > n$, which include, by Remark~\ref{ref:Dinfty:rem}, also $D_{n, \infty}$. Here, the dimension of $D_{n,m}$ is $6n-3$, the centre of $D_{n,m}$ is $3n$-dimensional and $x^{4n-3}\neq 0$. In both cases we obtain conditions
    \begin{align*}
        6n-3 &=4k+l\\    
        4n-2 &\leq 2k+l\\
        k+l &\leq 3n
    \end{align*}
    We obtain that $l$ is odd and that
    \[
        12n - 6 = 3\cdot (4n-2) \leq 6k + 3l = 3(2k + l) = 4k+l + 2(k + l) \leq 6n-3 + 6n = 12n-3.
    \]
    The number $3(2k+l)$ is odd and divisible by three, hence $3(2k+l) = 12n - 3$, so $k = n-1$, $l = 2n+1$.

    \emph{Case $2$.} The case $D_{n,m}$ with $n\leq m$ is similar. The two facts imply the following inequalities
    \begin{align*}
        4n+2m-4 &=4k+l\\    
        2n+2m-3 &\leq 2k+l\\
        k+l &\leq n+2m-1
    \end{align*}
    We obtain
    \begin{align*}
        3n - 3 = (4n+2m-4) - (n+2m-1) &\leq (4k + l) - (k+l) = 3k\\
        2n- 1 = (4n+2m-4) - (2n+2m-3) &\geq (4k + l) - (2k+l) = 2k
    \end{align*}
    so that $n-1 \leq k < n$, hence $k = n-1$, $l = 2m$.
\end{proof} 

\subsection{Existence of deformations of $D_{m,n}, D_{n, \infty}$ to semisimple algebras}

To provide the required deformations, we will make heavy use of transitivity of deformations proven in Theorem~\ref{J1}.
We will also use Remark~\ref{ref:Dinfty:rem} and consider only the $D_{n,m}$ case.

We proceed inductively. 
The base of the induction is formed by cases $D_{1, \infty}$ and $D_{n,1}$ as defined in Proposition~\ref{ref:Generators:prop}.
We will use the following observation.

\begin{remark}\label{ref:matrices:rem}
    The algebra $\kkxy/(xy+yx, x^2 - 1, y^2-1)$ is isomorphic to $\mathbb{M}_2(\kk)$. The isomorphism
    sends $x, y$ to 
    \[
        \begin{pmatrix}
            0 & 1\\
            1 & 0
        \end{pmatrix}\qquad
        \begin{pmatrix}
            1 & 0\\
            0 & -1
        \end{pmatrix},
    \]
    respectively.
\end{remark}

\begin{lemma}\label{ref:D1infty:lem}
    The algebra $D_{1, \infty}$ deforms to $\kk \times \kk \times \kk$.
\end{lemma}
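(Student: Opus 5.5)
The plan is first to identify $D_{1,\infty}$ as a commutative algebra in one variable, and then to write down an explicit polynomially split deformation. By Proposition~\ref{ref:Generators:prop} with $n=1$, the algebra $D_{1,\infty}$ is $\kkxy/I$ with $I = (xy+yx,\ x+y^2,\ x^{2})$, and it has dimension $6n-3 = 3$. The relation $x+y^2$ lets us eliminate $x = -y^2$. Under this substitution $xy+yx$ becomes $-2y^3$ and $x^2$ becomes $y^4$. Since $\operatorname{char}\kk = 0$, this gives
\[
    D_{1,\infty}\simeq \kk[y]/(y^3),
\]
with basis $1, y, y^2$. This matches the dimension count in Proposition~\ref{ref:Generators:prop}.

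Next, consider the $\kk[t]$-algebra
\[
    \calA := \frac{\kk[t][y]}{\left(y(y-t)(y+t)\right)} = \frac{\kk[t][y]}{(y^3 - t^2 y)}.
\]
The relation is monic in $y$, so $\calA$ is a free $\kk[t]$-module with basis $1, y, y^2$. Moreover, $\calA/(t)\simeq \kk[y]/(y^3)\simeq D_{1,\infty}$. Hence $\calA$ is a polynomial-type formal deformation in the sense of Definition~\ref{SWbook} and Remark~\ref{ref:twoConventions:rem}.

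After inverting $t$, the three elements $y$, $y-t$, $y+t$ are pairwise comaximal in $\kk[t^{\pm 1}][y]$: their pairwise differences are $\pm t$ and $\pm 2t$, which are units there because $\operatorname{char}\kk = 0$. The Chinese remainder theorem then gives
\[
    \calA[t^{-1}]\simeq \kk[t^{\pm1}]\times\kk[t^{\pm1}]\times\kk[t^{\pm1}] = (\kk\times\kk\times\kk)[t^{\pm1}].
\]
So $\calA$ is a polynomially split deformation of $D_{1,\infty}$ to $\kk^{\times 3}$. By Theorem~\ref{J1}, $D_{1,\infty}$ deforms to $\kk\times\kk\times\kk$.

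There is no real obstacle in this argument. The only point needing care is the first step: one should check that the generating relations listed in Proposition~\ref{ref:Generators:prop} for $n=1$ present exactly $\kk[y]/(y^3)$ as an abstract algebra, with no additional collapse. This follows from the elimination above together with the dimension count $3$.
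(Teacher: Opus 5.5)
Your proof is correct and is essentially the paper's argument. The paper likewise eliminates $x=-y^2$ to identify $D_{1,\infty}$ with $\kk[y]/(y^3)$, and uses the family $\kk[y,t]/(y(y-t)(y+t))$; you additionally spell out the freeness over $\kk[t]$ and the Chinese remainder splitting after inverting $t$, which the paper leaves implicit.
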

\begin{proof}
    The algebra is a quotient of $\kkxy$ by the ideal $(xy+yx, x + y^2, xy, x^{2})$. The second equation allows
    to substitute $x := -y^2$ and we obtain a quotient $\kk[y]/(y^3)$. The required deformation is given by $\kk[y, t]/(y\cdot (y-t)\cdot (y+t))$.
\end{proof}

\begin{lemma}\label{ref:Dn1:lem}
    For $n\geq 1$, the algebra $D_{n, 1}$ deforms to $\mathbb{M}_2(\kk)^{\times n-1} \times \kk\times \kk$.
\end{lemma}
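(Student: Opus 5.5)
The plan is to write $D_{n,1}$ explicitly and then deform its single nilpotent relation to a polynomial with distinct roots. For $m=1\le n$, Proposition~\ref{ref:Generators:prop} gives
\[
D_{n,1}=\frac{\kkxy}{\left(xy+yx,\ x^{2n-1}+1+y^2,\ x^{2n-1}\right)}=\frac{\kkxy}{\left(xy+yx,\ y^2+1,\ x^{2n-1}\right)},
\]
which has dimension $4n-2$, the same as $\mathbb{M}_2(\kk)^{\times n-1}\times\kk\times\kk$. So $D_{n,1}$ is the skew group algebra $\kk[x]/(x^{2n-1})\rtimes \mathbb{Z}/2$, where the generator $y$ acts by $x\mapsto -x$. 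After rescaling by $y\mapsto \sqrt{-1}\,y$ we may assume $y^2=1$.

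Fix distinct nonzero $c_1,\dots,c_{n-1}\in\kk$ with $c_i\neq\pm c_j$ for $i\neq j$, and set $p_t(x)=x\prod_{j=1}^{n-1}(x^2-tc_j^2)\in\kk[t][x]$. This is a monic odd polynomial of degree $2n-1$ with $p_0=x^{2n-1}$. Consider
\[
\calA=\frac{\kk[t]\langle x,y\rangle}{\left(xy+yx,\ y^2-1,\ p_t(x)\right)}.
\]
Because $p_t$ is odd, $y\,p_t(x)=p_t(-x)\,y=-p_t(x)\,y$. Hence the involution $x\mapsto -x$ preserves the ideal $(p_t)$ of the free $\kk[t]$-module $\kk[t][x]/(p_t)$, and $\calA$ is the skew group algebra $\left(\kk[t][x]/(p_t)\right)\rtimes\mathbb{Z}/2$. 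It is therefore free over $\kk[t]$ with basis $x^i,\,x^iy$ for $0\le i<2n-1$. Reducing modulo $t$ recovers $D_{n,1}$, so $\calA$ is a polynomial-type deformation of $D_{n,1}$. Checking freeness, i.e.\ that no extra relations appear, is the only point needing care, and the skew-group description handles it.

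Next, identify the fibre at $t=\lambda\neq0$; by rescaling we take $\lambda=1$. The roots of $p_1$ are $0$ and $\pm c_j$, all distinct, so $\kk[x]/(p_1)\simeq\kk\times\prod_j\left(\kk[x]/(x-c_j)\times\kk[x]/(x+c_j)\right)$. The involution fixes the factor at $0$ and swaps each pair $\pm c_j$. The corresponding idempotents are polynomials in $x$ that are invariant under $x\mapsto-x$, hence even, hence central in the fibre. So the fibre splits as a product over orbits:
\begin{itemize}
\item The orbit $\{0\}$ gives $\kk\rtimes\mathbb{Z}/2=\kk[y]/(y^2-1)\simeq\kk\times\kk$.
\item Each orbit $\{\pm c_j\}$ gives $\kkxy/(xy+yx,\ x^2-c_j^2,\ y^2-1)$. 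Rescaling $x$ by $c_j^{-1}$, this is $\mathbb{M}_2(\kk)$ by Remark~\ref{ref:matrices:rem}.
\end{itemize}
Hence $\calA/(t-1)\simeq\mathbb{M}_2(\kk)^{\times n-1}\times\kk\times\kk=:B$. Since this decomposition holds for every nonzero $\lambda$, each fibre at $t=\lambda\neq0$ is isomorphic to $B$.

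To conclude, note that for $\kk=\mathbb{C}$ this is a convergent (indeed polynomial) deformation with fibre $B$ at $t=1$. Since $B$ is semisimple, condition~\eqref{it:defEq7} of Theorem~\ref{J1} shows that $D_{n,1}$ deforms to $B$. For general $\kk$, the same argument works directly: the orbit of $B$ is open (Theorem~\ref{thm:gabriel}) and the map $\Spec\kk[t]\to\Spec\cM_d$ sends a nonempty open set into it, so the generic point lands in $O_B$ and $D_{n,1}$ topologically deforms to $B$. For $n=1$ the product over $j$ is empty, and the argument gives $\kk\times\kk$ directly.
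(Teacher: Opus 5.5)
Your proof is correct and is essentially the paper's argument. In both, $x^{2n-1}$ is deformed to an odd monic polynomial with simple roots while $xy+yx$ and $y^2-1$ are kept, and the result splits into $\kk\times\kk$ and $n-1$ copies of $\mathbb{M}_2(\kk)$ via the Chinese remainder theorem and Remark~\ref{ref:matrices:rem}.

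The differences are minor. The paper uses $x(x^{2n-2}-t^{2n-2})$ so that it can exhibit $\calA[t^{-1}]\simeq B[t^{\pm 1}]$ directly, which makes the deformation polynomially split. You instead check the closed fibres $t=\lambda\neq 0$ and conclude via Theorem~\ref{J1} and the openness of the semisimple orbit. Your skew group algebra description also justifies the freeness over $\kk[t]$, which the paper only asserts.
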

\begin{proof}
    The algebra $D_{n, 1}$ is a quotient of $\kkxy$ by the ideal $(xy+yx, 1 + y^2, x^{2n-1})$.
    By multiplying $y$ with a fourth root of unity, we change this ideal to $(xy+yx, y^2 - 1, x^{2n-1})$.
    Consider the algebra $\calA = \kkxy[t]/(xy+yx, y^2 - 1, x(x^{2n-2} - t^{2n-2}))$.
    It is a free $\kk[t]$-module of rank $4n-2$ and $\calA/(t) = D_{n,1}$.

    We will prove that $\calA$ is a polynomially split deformation of $D_{n,1}$
    to $\mathbb{M}_2(\kk)^{\times n-1} \times \kk\times \kk$.

    Let $\omega_1, \ldots ,\omega_{n-1}$ be solutions of $s^{n-1} = 1$
    in $\kk$. We have
    \[
        x^{2n-2} - t^{2n-2} = \prod_{i=1}^{n-1}\left( x^2 - \omega_i t^2 \right).
    \]
    Since $\kk$ has characteristic zero, the elements $x$ and $x^2 - \omega_i
    t^2$ for $i=1,
    \ldots , n-1$ are pairwise coprime and normal in $\calA[t^{-1}]$.
    By repeatedly using the Chinese Remainder Theorem, we obtain that
    \[
        \calA[t^{-1}]\simeq \frac{\calA[t^{-1}]}{(x)} \times \prod_{i=1}^{n-1} \frac{\calA[t^{-1}]}{(x^2 - \omega_i t^2)}
    \]
    The algebra $\calA[t^{-1}]/(x)$ is a quotient of $\kk[x, t^{\pm 1}]$ by $(y^2 - 1)$, hence is isomorphic to $\kk[t^{\pm 1}]\times \kk[t^{\pm 1}]$,
    as necessary. Fix $i$ and consider the algebra
    \[
        \calA_i := \frac{\calA[t^{-1}]}{(x^2 - \omega_i t^2)} \simeq \frac{\kkxy[t^{\pm 1}]}{(xy+yx,\ y^2 - 1,\ x^2 - \omega_i t^2)}.
    \]
    By substituting $x := xt\sqrt{\omega_i}$, we see that this quotient is isomorphic to the algebra
    $\kkxy/(xy + yx, y^2 - 1, x^2 - 1)[t^{\pm 1}]$. By Remark~\ref{ref:matrices:rem}, we know that this is $\mathbb{M}_2(\kk)[t^{\pm 1}]$.
\end{proof}

The induction step requires us to introduce the following auxiliary algebra
\begin{equation}\label{eq:A2}
    A_2 := \frac{\kkxy}{\left( xy + yx,\ y^2,\ (x^2 - 1)y,\ (x^2-1)^2 \right)}
\end{equation}
 \begin{lemma}\label{jeden}
     The algebra $A_2$ is $6$-dimensional, with $3$-dimensional centre and it can be deformed to 
     $\mathbb{M}_{2}(\kk)\times \kk \times \kk$.
\end{lemma}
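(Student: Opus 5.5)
First compute a basis. In $A_2$ we have $y^2=0$, $xy=-yx$, so every monomial can be rewritten as $x^i$ or $x^iy$. The relation $(x^2-1)^2=0$ reduces powers of $x$ to $1,x,x^2,x^3$. The relation $(x^2-1)y=0$ gives $x^2y=y$, so the $x^iy$ reduce to $y, xy$. This yields the spanning set
\[1,x,x^2,x^3,y,xy.\]
To show these six elements are independent, I would treat the listed relations together with the rewriting rules as a Gröbner basis in deglex with $y\gg x$ and check the overlaps. For example, $x\cdot(x^2-1)y$ against $xy+yx$ gives $(x^2-1)yx=-(x^2-1)xy$, which reduces to $0$ because $x^2$ is central modulo $xy+yx$. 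Alternatively, and more simply, I would exhibit a $6$-dimensional representation, namely the deformation fibre below, in which these elements are visibly independent.

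For the centre, note that $x^2$ commutes with $y$ and $x$, so $1$ and $x^2$ are central. Let $u=x^2-1$. Then $uy=0$ and $u^2=0$. The element $xy$ satisfies $x\cdot xy=x^2y=y$ and $xy\cdot x=-x^2y=-y$, so it is not central. The element $x^3$ is not central either, since $x^3y=xy$ while $yx^3=-xy$. I would check that $uy=0$, and that $ux$ commutes with $y$, because $uxy=-uyx=0$ and $yux=uyx=0$. So $ux=x^3-x$ is central. Conversely, solving the linear conditions $[z,x]=[z,y]=0$ on a general element $z=\alpha+\beta x+\gamma x^2+\delta x^3+\epsilon y+\zeta xy$ should give exactly $\operatorname{span}(1,x^2,x^3-x)$. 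The computation goes as follows: $[z,x]=0$ forces $\epsilon=\zeta=0$, and $[z,y]=2\beta xy+2\delta x^3y=2(\beta+\delta)xy$ forces $\delta=-\beta$. Hence the centre is $3$-dimensional.

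For the deformation, I would mimic Lemma~\ref{ref:Dn1:lem}. Set
\[\calA=\kkxy[t]/\bigl(xy+yx,\ y^2-t^2,\ (x^2-1)y,\ (x^2-1)(x^2-1-t^2)\bigr),\]
or some similar choice in which the $y^2$ relation is perturbed in a way compatible with $(x^2-1)y=0$. That compatibility is the hard point: $(x^2-1)y=0$ together with $y^2=c\neq0$ forces $x^2-1=0$ on the part where $y$ is invertible. So I would instead try
\[\calA=\kkxy[t]/\bigl(xy+yx,\ y^2-t^2(x^2-1)-\dots\bigr),\]
where the relations are designed so that after inverting $t$ the algebra splits, using the central elements, into a factor $x^2=1,\ y^2=1$, which is $\mathbb{M}_2(\kk)$ by Remark~\ref{ref:matrices:rem}, and a factor with $y=0$ and $x^2=1+t^2$, which is $\kk\times\kk$ since $x=\pm\sqrt{1+t^2}$ after adding roots.

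Concretely, I would take the relations
\[xy+yx,\qquad y^2-t(x^2-1-t),\qquad (x^2-1-t)y,\qquad (x^2-1)(x^2-1-t).\]
At $t=0$ these recover $A_2$. The central element $x^2$ then satisfies $(x^2-1)(x^2-1-t)=0$, so the Chinese Remainder Theorem applied to the coprime central factors $x^2-1$ and $x^2-1-t$ splits $\calA[t^{-1}]$ into two pieces. On the piece $x^2=1$ we get $(-t)y=0$, hence $y=0$, so that piece is $\kk[x]/(x^2-1)\cong\kk\times\kk$. On the piece $x^2=1+t$ we get $y^2=t^2$, and after rescaling $x$ and $y$ this becomes $\mathbb{M}_2$, possibly after adjoining $\sqrt{1+t}$, which is harmless by Theorem~\ref{J1}.

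\textbf{Main obstacle.} I expect the hardest step to be verifying flatness, i.e.\ that $\calA$ is free of rank $6$ over $\kk[t]$. I would do this by checking that the spanning set $1,x,x^2,x^3,y,xy$ still spans $\calA$, and that the generic fibre already has dimension $4+2=6$. Together these force the rank to be exactly $6$, so $\calA$ is torsion-free and hence free.
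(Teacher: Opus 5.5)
\textbf{Gap in the deformation step.} Your basis and centre computations are fine; the centre is $\operatorname{span}(1,x^2,x^3-x)$, which the paper does not spell out. The problem is the family you finally write down: it does not behave as you describe.

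Put $u=x^2-1$ and $v=x^2-1-t$, both central.
\begin{itemize}
\item On the Chinese-remainder factor where $u=0$ you have $v=-t$. So $vy=0$ forces $y=0$, but then your relation $y^2-tv=0$ reads $t^2=0$ there. This factor is the zero ring, not $\kk\times\kk$.
\item On the factor where $v=0$ the same relation gives $y^2=0$, not $y^2=t^2$. This factor is $\kkxy[t^{\pm1}]/(xy+yx,\ y^2,\ x^2-1-t)$, which is $4$-dimensional and has the square-zero ideal spanned by $y,xy$.
\end{itemize}
So the generic rank is $4$, not $6$. Concretely, $vy^2=0$ and $y^2=tv$ give $tv^2=0$, while $uv=0$ gives $v^2=-tv$. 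Hence $t^2v=0$, although $v\neq 0$ since it reduces to $x^2-1\neq0$ in $A_2$. Your $\mathcal{A}$ therefore has $t$-torsion, is not flat, and its generic fibre is not semisimple. The rank count planned in your last paragraph would fail.

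\textbf{The fix.} Your piece-by-piece analysis is exactly right for the relation $y^2-t(x^2-1)$ in place of $y^2-t(x^2-1-t)$. Use the relations $xy+yx$, $y^2-t(x^2-1)$, $(x^2-1-t)y$, $(x^2-1)(x^2-1-t)$. Then:
\begin{itemize}
\item the factor $x^2=1$ has $y=0$ consistently and is $\kk\times\kk$;
\item the factor $x^2=1+t$ has $y^2=t^2$, and becomes $\mathbb{M}_2(\kk)$ after rescaling $x$ by $\sqrt{1+t}$ (which exists in $\kk[\![t]\!]$) and $y$ by $t$;
\item the rewriting rules $yx\to -xy$, $y^2\to t(x^2-1)$, $x^2y\to(1+t)y$, $x^4\to(2+t)x^2-(1+t)$ have all overlaps resolving.
\end{itemize}
So the family is free over $\kk[t]$ on $1,x,x^2,x^3,y,xy$, with special fibre $A_2$. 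One small point: a fibre at $t\neq 0$ is not a representation of $A_2$, so by itself it cannot witness linear independence in $A_2$. The Gr\"obner check, or the freeness-plus-rank argument, is what does that.

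\textbf{Comparison with the paper.} Once corrected, your route differs from the paper's. The paper keeps $y^2=0$ and only perturbs $(x^2-1)^2$ to $(x^2-1)(x^2-t)$. This splits off $\kk\times\kk$ but leaves the non-semisimple factor $\kkxy/(xy+yx,\ y^2,\ x^2-1)$. That factor is then deformed to $\mathbb{M}_2(\kk)$ by a second family, $y^2-t$. The two steps are glued by orbit closures (transitivity, Theorem~\ref{J1}), deducing the case $\lambda=1$, which is $A_2$, from the cases $\lambda\neq 0,1$.

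Your corrected family couples $y^2$ to $x^2-1$ so that both degenerations happen at once. This gives a single explicit polynomially split deformation without appeal to transitivity, at the cost of a slightly less obvious flatness check.
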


\begin{proof}
    Consider the $\kk[t]$-algebra $A_2(t) := \kkxy[t]/I$, where $I$ is generated by $xy + yx$, $y^2$, $(x^2 - 1)y$, $(x^2-1)\cdot (x^2 - t)$.
    This is a free $\kk[t]$-module of rank $6$. For every $\lambda\in \kk \setminus \{1, 0\}$, consider the algebra
    $A_2(\lambda) := A_2(t) / (t-\lambda)$. Explicitly speaking, it is the algebra
    \[
        A_2(\lambda) = \frac{\kkxy}{\left( xy + yx,\ y^2,\ (x^2 - 1)y,\ (x^2-1)\cdot (x^2 - \lambda) \right)}.
    \]
    Since $\lambda\neq 1$, the central elements $x^2 - 1$, $x^2 - \lambda$ together generate the unit ideal. By the Chinese remainder theorem
    \[
        A_2(\lambda) \simeq \frac{A_2(\lambda)}{(x^2 - 1)} \times \frac{A_2(\lambda)}{(x^2 - \lambda)}.
    \]
    In the quotient $A_2(\lambda)/(x^2-\lambda)$, the image of $y$ is zero, so $A_2(\lambda) \simeq \frac{\kk[x]}{(x^2 - \lambda)} \simeq \kk \times \kk$,
    since $\kk$ is algebraically closed and $\lambda\neq 0$. Let us analyse
    \[
        A' := \frac{A_2(\lambda)}{(x^2 - 1)}\simeq \frac{\kkxy}{(xy + yx,\ y^2,\ x^2 - 1)}.
    \]
    Take
    \[
        A'(t) := \frac{\kkxy[t]}{(xy + yx,\ y^2 - t,\ x^2 - 1)}.
    \]
    This is a free $\kk[t]$-module of rank $4$ with basis $1$, $x$, $y$, $xy$. For every $t = \mu\in \kk\setminus \{0\}$, we  
    can choose $\nu\in \kk$ such that $\nu^2 = \mu$. After a change $y\mapsto y\cdot \nu$, we have
    \[
        \frac{A'(t)}{(t - \mu)} = \frac{\kkxy[t]}{(xy + yx,\ y^2 - \nu^2,\ x^2 - 1)}\simeq \frac{\kkxy[t]}{(xy + yx,\ y^2 - 1,\ x^2 - 1)}.
    \]
    The rightmost algebra is the matrix algebra $\mathbb{M}_2(\kk)$ by Remark~\ref{ref:matrices:rem}. Therefore, the algebra $A'(t)$ yields a deformation of
    $A'$ to $\mathbb{M}_2(\kk)$. Consequently, for every $\lambda\neq 0,1$, the algebra $A_2(\lambda)$ topologically deforms
    to $\mathbb{M}_2(\kk) \times \kk \times \kk$. Thus, the same is true for $\lambda = 1$.
\end{proof}

\begin{proposition}\label{trzy} Let $m,n\geq 2$ be natural numbers. Then the Jacobi algebra  $D_{n,m}$ can be flatly deformed to the Jacobi algebra  $D_{n-1, m-1}\times A_{2}$.    Moreover, the Jacobi algebra $D_{n, \infty }$ can be deformed to $D_{n-1, \infty }\times A_{2}$. 
\end{proposition}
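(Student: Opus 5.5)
The approach is to write down an explicit one-parameter family over $\kk[t]$ that specialises to $D_{n,m}$ at $t=0$. For $t=\lambda\neq 0$, the fibre should split by the Chinese remainder theorem into a piece supported at $x=0$, isomorphic to $D_{n-1,m-1}$, and a piece supported at $x^2=\lambda$, isomorphic to $A_2$. The conclusion then follows from Theorem~\ref{J1}: fibres over a dense set of $\lambda$ give a topological deformation, and rescaling $\lambda$ to $1$ shows the isomorphism type is constant. Two facts make this natural. First, $x^2$ is central in any quotient containing $xy+yx$. Second, the dimensions match: for $m\leq n$ we have $\dim D_{n,m}=4n+2m-4=(4(n-1)+2(m-1)-4)+6$, and the $m>n$ case checks the same way.

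\emph{The family.} I would take $\calA=\kkxy[t]/I_t$, where $I_t$ is generated by
\[
xy+yx,\qquad y^2+\bigl(x^{2n-3}+x^{2m-4}\bigr)(x^2-t),
\]
together with the extra relations forced by completion. Following the proof of Proposition~\ref{ref:Generators:prop}, these should be $x^{2n-3}(x^2-t)y$ and a pure power relation such as $x^{2n+2m-7}(x^2-t)^2$. At $t=0$ this recovers exactly the generating relations and Gröbner basis of $D_{n,m}$. The $D_{n,\infty}$ case is treated the same way with the $x^{2m-4}$ term dropped.

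\emph{Flatness.} I would show that $\calA$ is a free $\kk[t]$-module of rank $\dim D_{n,m}$ by checking that the listed relations form a Gröbner basis over $\kk[t]$, with the same leading monomials as in Proposition~\ref{ref:Generators:prop}. This is a direct overlap computation in deglex order with $y\gg x$.

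\emph{Splitting the generic fibre.} Over $\kk[t^{\pm1}]$, the central elements $x^{2N}$ and $(x^2-t)^2$ are coprime, so $\calA[t^{-1}]$ splits into two factors. In the factor where $x^2-t$ is invertible, dividing by it turns the relations into $y^2+x^{2n-3}+x^{2m-4}$, together with the nilpotency relations. This is $D_{n-1,m-1}$ with coefficients in $\kk[t^{\pm1}]$.

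\emph{The factor near $x^2=t$: the main obstacle.} In the other factor $x$ is invertible, and the relations become $y^2=u\,(x^2-t)$ for a unit $u$, together with $(x^2-t)y=0$ and $(x^2-t)^2=0$. Multiplying the first relation by $y$ gives $y^3=0$, but this fibre need not literally be $A_2$, where $y^2=0$. My first attempt would be a change of variables $x\mapsto x+c\,y^2$ (or similar) to absorb the $y^2$ term, using that $y^2$ is central and squares to zero; the goal is to identify the factor with $A_2$ after rescaling $x\mapsto \sqrt{t}\,x$. If that fails, I would adjust the family so that this factor is $A_2$ on the nose, for instance by putting the factor $(x^2-t)$ only on the $x^{2n-3}$ term and imposing $(x^2-t)y$ directly. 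I would then recheck flatness, and compare the dimension of the centre with Lemma~\ref{jeden} as a sanity check.
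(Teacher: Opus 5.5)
\textbf{There is a genuine gap, and it cannot be closed: the statement is false as written.}

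Your family is the paper's own, with $x^2-t$ in place of $x^2-t^2$. Your flatness and Chinese-remainder steps are the same as in its proof. (In the first factor you must rescale $y$ by a central square root of $x^2-t$, not divide by $x^2-t$. That square root exists after adjoining $\sqrt{-t}$, because $x$ is nilpotent there.)

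The failing step is the one you flagged. In the second factor at $t=\lambda\neq 0$, put $z=x^2-\lambda$. Then $z^2=0$, $zy=0$ and $x^2z=\lambda z$. This factor is $6$-dimensional with basis $1,x,z,xz,y,xy$, and $y$ lies in its radical. However,
\[
y^2=-\bigl(x^{2n-3}+x^{2m-4}\bigr)z=-\bigl(\lambda^{n-2}x+\lambda^{m-2}\bigr)z\neq 0 .
\]
In $A_2$ the radical is spanned by $x^2-1$, $x(x^2-1)$, $y$, $xy$, and it squares to zero. So no substitution such as $x\mapsto x+cy^2$ can turn this factor into $A_2$. For generic $\lambda$ the factor is the path algebra of the $2$-cycle $a\colon +\to -$, $b\colon -\to +$ modulo $(aba,bab)$; call it $C$. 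The paper's proof has exactly this gap. It writes $\calA[t^{-1}]/(x^2-t^2)^2$ with the relation $y^2$, where $y^2+(x^{2n-3}+x^{2m-4})(x^2-t^2)$ is required.

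Your fallback of redesigning the family cannot work either, because the proposition already fails for $n=m=2$. Here $D_{1,1}=\kk[y]/(y^2+1)\simeq\kk\times\kk$, so the claimed target is $\kk^{2}\times A_2$.
\begin{itemize}
\item A direct computation in the basis $x^i, x^iy$ gives $\dim_{\kk}\operatorname{Der}(D_{2,2})=6$. There are three inner derivations, and the outer ones are spanned by $(x,y)\mapsto(x^3,x^2y)$, $(x^4,0)$ and $(0,x^4)$.
\item Likewise $\dim_{\kk}\operatorname{Der}(\kk^2\times A_2)=\dim_{\kk}\operatorname{Der}(A_2)=6$.
\item In characteristic zero $\dim O_B=\dim G-\dim\operatorname{Der}(B)$, so the two orbits in $\cM_8$ have the same dimension.
\item $D_{2,2}$ is local and $\kk^2\times A_2$ is not, so they lie in different orbits.
\item The boundary of an orbit closure consists of orbits of strictly smaller dimension. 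By Theorem~\ref{J1}, $D_{2,2}$ therefore does not deform to $D_{1,1}\times A_2$.
\end{itemize}
The same count rules out $D_{2,\infty}\to D_{1,\infty}\times A_2$: both sides have $8$-dimensional derivation algebras.

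What the family does prove is that $D_{n,m}$ deforms to $D_{n-1,m-1}\times C$. Moreover, $C$ deforms to $\mathbb{M}_2(\kk)\times\kk\times\kk$ via the relations $aba=sa$, $bab=sb$. So the downstream existence theorem survives once $A_2$ is replaced by $C$.
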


\begin{proof}
    Consider first $m \leq n$.
    By Proposition~\ref{ref:Generators:prop}, the algebra $D_{n,m}$ has standard basis consisting of elements
    \[
        xy+yx,\ x^{2n-1} + x^{2m-2} + y^2,\ x^{2n+2m-3},\ x^{2n-1}y.
    \]
    Consider $\calA := \kkxy[t]/I$, where the ideal $I$ is generated by 
    \[
        xy+yx,\ x^{2(n-1)+2(m-1)-3}\left(x^{2}-t^{2}\right)^{2},\ x^{2(n-1)-1}\left(x^{2}-t^{2}\right)y,\ y^{2}+x^{2(n-1)-1}\left(x^{2}-t^{2}\right)+x^{2(m-1)-2}\left(x^{2}-t^{2}\right).
    \]
    The above relations form a Gr{\"o}bner basis of $\kkxy$ with the deglex monomial order defined above Proposition~\ref{ref:Generators:prop}.
    This implies that $\calA$ is a free $\kk[t]$-module.
    The elements $x^{2(n-1)+2(m-1)-3}$ and $(x^2 - t^2)^2$ are central in $\calA[t^{-1}]$, multiply to zero and are coprime in $\calA[t^{-1}]$, so by the Chinese remainder theorem
    we have
    \[
        \calA[t^{-1}] \simeq \frac{\calA[t^{-1}]}{(x^{2(n-1)+2(m-1)-3})} \times \frac{\calA[t^{-1}]}{(x^2 - t^2)^2}.
    \]
    Let us analyse the two factors.
    For the first factor, in the algebra $\calA[t^{-1}] / (x^{2(n-1)+2(m-1)-3})$ the element $x$ is nilpotent, so $x^2 - t^2$ is invertible.
    Let $y' := y\cdot (x^2-t^2)^{-1}$, so that $y = y'\cdot (x^2 - t^2)$. In variables $x$, $y'$, the algebra $\calA[t^{-1}]$ becomes
    \[
        \frac{\kk\langle x, y'\rangle[t^{\pm1}]}{(xy' + y'x,\ x^{2(n-1)+2(m-1)-3},\ x^{2(n-1)-1}y',\ (y')^2 + x^{2(n-1)-1} + x^{2(m-1)-2})}
    \]
    which is exactly $D_{n,m}[t^{\pm 1}]$.
    For the second factor, recall the algebra $A_2$ from~\eqref{eq:A2}.
    The algebra $\calA[t^{-1}]/(x^2 - t^2)^2$ is given explicitly as
    \[
        \frac{\kkxy[t]}{\left( xy+yx,\ (x^2 - t^2)^2,\ (x^2 - t^2)\cdot y,\ y^2 \right)},
    \]
    which is isomorphic to $A_2[t^{\pm 1}]$ via the map that takes $x$ to $tx$. Putting both factors together, we observe that $\calA$ is
    a polynomially split deformation of $D_{n-1,m-1}\times A_2$ to $D_{n,m}$.

    Consider now $m > n$. The proof in this case is completely analogous, the only difference is that the relation $x^{2n+2m-3}$ is replaced
    by $x^{4n-2}$ which in $\calA$ becomes $x^{4(n-1)-2}\cdot (x^2 - t^2)^2$.
\end{proof}

By using transitivity of deformations (Theorem~\ref{J1}), we obtain the following corollary: 
\begin{theorem}\label{ref:existence:thm}
    Let $B=\mathbb{M}_{2}({\kk}) \times \kk \times \kk$. 
    If $n\geq m$ then the Jacobi Algebra $D_{n,m}$ can be flatly deformed to the algebra $D_{n-m+1, 1}\times B^{\times m-1}$ and to $\mathbb{M}_2(\kk)^{\times n-1}\times \kk^{2m}$.
 If $n < m$, then the Jacobi Algebra $D_{n,m}$ can be flatly deformed to the algebra $D_{1, \infty}\times B^{\times n-1}$ and to $\mathbb{M}_2(\kk)^{\times n-1}\times \kk^{2n+1}$.
\end{theorem}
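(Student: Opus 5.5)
The plan is to iterate Proposition~\ref{trzy} and chain the resulting deformations using transitivity from Theorem~\ref{J1}. We also need one more ingredient: deformations are compatible with products. If $A$ deforms to $A'$ and $C$ deforms to $C'$, then $A\times C$ deforms to $A'\times C'$. To see this, take polynomially split deformations $\calA$ and $\mathcal{C}$ from Theorem~\ref{J1}. Then $\calA\times\mathcal{C}$ is a free $\kk[t]$-module, it reduces to $A\times C$ modulo $t$, and after inverting $t$ it becomes $(A'\times C')[t^{\pm 1}]$. So it is a polynomially split deformation. As a special case, taking $\mathcal{C} = C[t]$ shows that $A\times C$ deforms to $A'\times C$.

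\emph{Case $n\geq m$.} We induct on $m$. For $m=1$ there is nothing to prove. For $m\geq 2$, Proposition~\ref{trzy} gives a deformation of $D_{n,m}$ to $D_{n-1,m-1}\times A_2$, and we still have $n-1\geq m-1$. By induction combined with the product compatibility, $D_{n-1,m-1}\times A_2$ deforms to
\[
D_{n-m+1,1}\times B^{\times m-2}\times A_2 .
\]
By Lemma~\ref{jeden} and product compatibility, this in turn deforms to $D_{n-m+1,1}\times B^{\times m-1}$. Transitivity (Theorem~\ref{J1}) then gives the first claim.

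For the second claim, apply Lemma~\ref{ref:Dn1:lem} to the factor $D_{n-m+1,1}$. This yields a deformation to
\[
\mathbb{M}_2(\kk)^{\times n-m}\times\kk^{2}\times B^{\times m-1}=\mathbb{M}_2(\kk)^{\times n-1}\times\kk^{2m},
\]
and we apply transitivity once more. A small point to check is that Proposition~\ref{trzy} also covers the boundary case $m-1=1$, that is, the passage from $D_{n,2}$ to $D_{n-1,1}\times A_2$. Its proof only uses the explicit relations together with the Gröbner basis, and those are defined for $D_{n,1}$ in Proposition~\ref{ref:Generators:prop}. I would verify that the exponents (for instance $2(m-1)-2=0$) still make sense there.

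\emph{Case $n<m$.} Apply Proposition~\ref{trzy} $n-1$ times to reach $D_{1,m-n+1}\times A_2^{\times n-1}$. Since $m-n+1\geq 2\cdot 1$, Remark~\ref{ref:Dinfty:rem} identifies $D_{1,m-n+1}$ with $D_{1,\infty}$. Again this needs the definitions at index $1$ from Proposition~\ref{ref:Generators:prop}; at that index $x^{4n-2}=x^2$, which matches the relation $x^{2}$ appearing in Lemma~\ref{ref:D1infty:lem}. Then Lemma~\ref{jeden} deforms each $A_2$ to $B$, and Lemma~\ref{ref:D1infty:lem} deforms $D_{1,\infty}$ to $\kk^3$. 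The result is
\[
\mathbb{M}_2(\kk)^{\times n-1}\times\kk^{3+2(n-1)}=\mathbb{M}_2(\kk)^{\times n-1}\times\kk^{2n+1}.
\]

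I expect the main obstacle to be the boundary bookkeeping rather than the deformation theory. One has to make sure that Proposition~\ref{trzy} remains valid when it lands on the degenerate indices $D_{k,1}$ or $D_{1,k}$, where some exponents become $0$ or negative, and that these degenerate algebras agree with the ones used in Lemmas~\ref{ref:D1infty:lem} and~\ref{ref:Dn1:lem}. A related check is that the condition $m>n$ is preserved at each step, since $m-1>n-1$. As a cross-check, the dimensions in each chain should stay constant. For example, $\dim D_{n,m}=4n+2m-4$ should equal $\dim D_{n-1,m-1}+6$, and it does. Any dimension mismatch would signal an indexing error.
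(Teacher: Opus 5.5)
Your proposal is correct and matches the paper's argument. You iterate Proposition~\ref{trzy}, deform each copy of $A_2$ to $B$ by Lemma~\ref{jeden}, chain by transitivity (Theorem~\ref{J1}), and finish with Lemma~\ref{ref:Dn1:lem}, respectively Remark~\ref{ref:Dinfty:rem} and Lemma~\ref{ref:D1infty:lem}. Your explicit product-compatibility argument (a product of polynomially split deformations is polynomially split) and your boundary-index checks spell out steps the paper leaves implicit, notably the passage from ``$A_2$ deforms to $B$'' to ``$D_{n-1,m-1}\times A_2$ deforms to $D_{n-1,m-1}\times B$''.
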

\begin{proof} 
    By Proposition~\ref{trzy} we get that $D_{n,m}$ can be deformed to $D_{n-1,m-1}\times A_{2}$. By Lemma~\ref{jeden}, the algebra $A_{2}$ can be deformed to 
    $B = \mathbb{M}_{2}(\kk)\times \kk\times \kk$. Therefore, by transitivity of deformations,
 we get that $D_{n,m}$ can be deformed to $D_{n-1,m-1}\times B$.
 
 If $n\geq m$ then the first deformation follows by induction on $n$ since we can decrease $n,m$ to get $m=1$.
 The second result follows from Lemma~\ref{ref:Dn1:lem}.

  If $m>n$, then we can decrease $n,m$ to get $n=1$. By Remark~\ref{ref:Dinfty:rem}, we then have $D_{1,m}\simeq D_{1,\infty}$.
  This yields the first deformation. The second follows from Lemma~\ref{ref:D1infty:lem}.
\end{proof}

{\bf Acknowledgments.} 
 The authors would like to thank  Michael Wemyss for his inspiring questions and for comments about applications of deformations of noncommutative algebras.
 The second author thanks {\O}yvind Solberg for help with understanding the QPA package.
The first-named author is supported by National Science Centre grant 2023/50/E/ST1/0033.
This research of the second author was supported by 
EPSRC programme grant EP/R034826/1.

\end{document}